\documentclass[11pt]{amsart}

\usepackage{amscd}
\usepackage{amsfonts}
\usepackage{ascmac}
\usepackage[usenames]{color}
\usepackage{graphics}
\usepackage{amssymb,latexsym}
\usepackage{amsmath,amsthm}

\theoremstyle{plain}
\newtheorem{thm}{Theorem}[section]
\newtheorem{prop}[thm]{Proposition}
\newtheorem{lem}[thm]{Lemma}
\newtheorem{cor}[thm]{Corollary}

\newtheorem{ex}[thm]{Example}
\newtheorem{rem}[thm]{Remark}



\newcommand{\bfC}{{\mathbf C}}
\newcommand{\bfP}{{\mathbf P}}
\newcommand{\bfR}{{\mathbf R}}

\newcommand{\barf}{{\overline f}}

\newcommand{\bari}{{\overline i}}
\newcommand{\barj}{{\overline j}}
\newcommand{\bark}{{\overline k}}
\newcommand{\barl}{{\overline \ell}}

\newcommand{\baru}{{\overline u}}
\newcommand{\barp}{{\overline p}}

\newcommand{\barz}{{\overline z}}

\newcommand{\barbet}{{\overline \beta}}

\newcommand{\barmu}{{\overline \mu}}

\newcommand{\barpartial}{{\overline \partial}}

\newcommand{\mapright}[1]{\smash{\mathop{   \hbox to 0.7cm{\rightarrowfill}}
  \limits^{#1}}}

\def\grad{\mathrm{grad}}

\renewcommand{\emph}[1]{{\color{red} \it #1}}

\definecolor{orange}{cmyk}{0, 0.7, 1, 0}
\definecolor{light-green}{cmyk}{0.5, 0, 0.5, 0}
\definecolor{light-blue}{cmyk}{0.5, 0, 0, 0}
\definecolor{light-yellow}{cmyk}{0,0,0.6,0}
\definecolor{dark-green}{cmyk}{0.7, 0, 0.7, 0.5}

\title{Cahen--Gutt moment map, closed Fedosov star product and structure of the automorphism group}
\author{Akito Futaki and Hajime Ono}
\address{Yau Mathematical Sciences Center, Tsinghua University, Haidian district, Beijing 100084, China}
\email{akito@math.tsinghua.edu.cn}
\address{Department of Mathematics, Saitama University, 255 Shimo-Okubo, Sakura-Ku,
Saitama 380-8570, Japan}
\email{hono@rimath.saitama-u.ac.jp}

\date{October 28, 2018}

\pagestyle{plain}

\begin{document}
\begin{abstract}
We show that if a compact K\"ahler manifold $M$ with non-negative Ricci curvature admits closed Fedosov star product then 
the reduced Lie algebra of holomorphic vector fields on $M$ is reductive.
This comes in pair with the obstruction previously found by La Fuente-Gravy \cite{La Fuente-Gravy 2016_2}. 
More generally we consider the squared norm of Cahen--Gutt moment map
as in the same spirit of Calabi functional for the scalar curvature in cscK problem, and 
prove a Cahen--Gutt version
of Calabi's theorem on the structure of the Lie algebra of holomorphic vector fields
for extremal K\"ahler manifolds. 
\end{abstract}

\maketitle


\section{Introduction.}
A deformation quantization is a formal associative deformation of a Poisson algebra $(C^\infty(M), \cdot, \{\cdot,\cdot\})$
into the space $C^\infty(M)[[\nu]]$ of formal power series in $\nu$ with a composition law $\ast$ called the star product
with the following property. The constant function $1$ is a unit, and if 
we write for $f,\ g  \in C^\infty(M)$ 
\begin{eqnarray}\label{star}
f \ast g = \sum_{r=0}^\infty C_r(f,g) \nu^r,
\end{eqnarray}
then $\ast$ is required to satisfy
\begin{eqnarray}\label{star2}
C_0(f,g) = f\cdot g, \qquad C_1(f,g) - C_1(g,f)  = \{f,g\},
\end{eqnarray}
and $C_r$'s are required to be bidifferential operators.
For symplectic manifolds, the existence of star products was shown by Dewilde and Lecompte \cite{DWL}, Fedosov 
\cite{Fedosov1994} and Omori, Maeda and Yoshioka \cite{OMY1991}. For general Poisson manifolds, the existence
of star products was shown by Kontsevich \cite{Kon2003}. 
A star product on a compact symplectic manifold $(M,\omega)$ of dimension $2m$ 
is called closed  (in the sense of Connes-Flato-Sternheimer \cite{CFS}) if 
\begin{eqnarray}\label{star3}
\int_M F\ast H\ \omega^m = \int_M H \ast F\ \omega^m
\end{eqnarray}
for all $F,\ H \in C^\infty(M)[[\nu]]$.

Let $(M, \omega)$ be a compact connected symplectic manifold of dimension $2m$.
In \cite{CahenGutt}, Cahen and Gutt defined a moment map $\mu$ on the space of symplectic connections for
the action of the group of Hamiltonian diffeomorphisms. In \cite{La Fuente-Gravy 2016}, La Fuente-Gravy
showed that if the Fedosov star product $\ast_\nabla$ is closed for a symplectic connection $\nabla$ then $\mu(\nabla)$ is constant. Assuming $M$ is K\"ahler and
fixing a K\"ahler class, La Fuente-Gravy further 
defined in \cite{La Fuente-Gravy 2016_2} a Lie algebra character $\mathrm{Fut} : \mathfrak g \to \mathbf R$
where $\mathfrak g$  is 
the reduced Lie algebra 
of holomorphic vector fields on $M$,
and showed that 
if there exists a K\"ahler metric in the fixed K\"ahler class such that 
$\mu(\nabla)$ is constant for the Levi-Civita connection $\nabla$ 
then the character $\mathrm{Fut}$ vanishes. In particular, non-vanishing of the character $\mathrm{Fut}$
obstructs the existence of a K\"ahler metric in the fixed K\"ahler class such that the Fedosov star product $\ast_\nabla$ 
is closed.  
(Recall that, by 
definition, the reduced Lie algebra $\mathfrak g$ of holomorphic vector fields on a compact K\"ahler manifold
consists of holomorphic vector fields of the form $\grad^\prime f$ for some complex valued smooth
function $f$ (c.f. \cite{GauduchonLN}). In fact, $\mathfrak g$ does not depend on the choice of the K\"ahler metric.)
La Fuente-Gravy showed that, when the fixed K\"ahler class is integral and equal to $c_1(L)$ for some ample line bundle
 $L$, the character he 
 defined is one of the obstructions for the polarized manifold $(M,L)$ 
to be asymptotically Chow semistable obtained in \cite{futaki04-1}, see also \cite{futakiICCM10}
 for more applications.
 
 In the problem of finding constant scalar curvature K\"ahler (cscK) metrics, Fujiki \cite{fujiki92} and 
 Donaldson \cite{donaldson97} set up a moment map $\tau$ on the space of complex structures compatible with a fixed
 symplectic form $\omega$ where $\tau(J)$ at a complex structure $J$ is the scalar curvature of the K\"ahler manifold
 $(M, \omega, J)$. In this cscK problem, we also have the Lie algebra character which obstructs the existence of
 cscK metrics (\cite{futaki83.1}). On the other hand, we also have another obstruction which claims that the Lie algebra of 
 all holomorphic vector fields of a cscK manifold has to be reductive (\cite{matsushima57}, \cite{Lic}). 
 This is further extended by Calabi \cite{calabi85} to a structure theorem of the Lie algebra for compact extremal K\"ahler
 manifolds. As can be seen in other similar problems (see e.g. \cite{futaki07.1}, \cite{FO_reductive17}, \cite{Lahdili17}, \cite{AGGarcia16}), 
 the two obstructions of the Lie algebra character and the reductiveness
 come always in pair. The purpose of this paper is to show that this is the case, namely the following is the main result of this paper.
 
 \begin{thm}\label{reductiveness}
 Let $M$ be a compact K\"ahler manifold. If there exists a K\"ahler metric with non-negative Ricci curvature such that $\mu(\nabla)$ is constant for the 
 Cahen--Gutt moment map $\mu$ and the Levi-Civita connection $\nabla$ then
 the reduced Lie algebra $\mathfrak g$ of holomorphic vector fields is reductive. In particular, if $\mathfrak g$
 is not reductive then there is no K\"ahler metric with non-negative Ricci curvature such that the Fedosov star product $\ast_\nabla$ for the Levi-Civita
 connection $\nabla$ is closed.
\end{thm}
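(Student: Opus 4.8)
The plan is to follow the now-standard Matsushima--Lichnerowicz argument, adapted to the Cahen--Gutt setting, exactly as one does for cscK metrics but replacing the scalar curvature operator by the linearization of the Cahen--Gutt moment map $\mu$. Recall that the constancy of $\mu(\nabla)$ should be equivalent to the vanishing of a certain fourth-order operator applied to the K\"ahler potential, and — by the moment map picture of Cahen--Gutt — the infinitesimal action of a Hamiltonian vector field $X_f = \grad' f + \overline{\grad' f}$ on the space of connections is, after projection, governed by an operator $\mathcal{D}$ whose kernel (on the space of complex-valued functions) is precisely the space of functions $f$ with $\grad' f$ holomorphic, i.e. corresponds to the reduced Lie algebra $\mathfrak{g}$. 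The first step is therefore to make precise this operator $\mathcal{D} = P^*P$ or a similar self-adjoint form, and to record that $\grad' f \in \mathfrak{g}$ iff $f \in \ker \mathcal{D}$; this is where the hypothesis enters, since for $\mu(\nabla)$ constant the relevant operator picks up no zeroth-order (curvature-twisted) term that would spoil self-adjointness or sign.

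Second, I would set up the $L^2$-decomposition. Write $\mathfrak{g} = \mathfrak{g}_0 \oplus \C\cdot(\text{constants' gradients})$ or, more to the point, decompose an arbitrary element of $\mathfrak{g}$ using the real and imaginary parts of its potential: for $\grad' f \in \mathfrak{g}$ write $f = f_1 + \sqrt{-1} f_2$ with $f_1, f_2$ real. The key algebraic fact to establish is that if $\grad' f$ is holomorphic then so are $\grad' f_1$ and $\grad' f_2$ — equivalently, that $\mathfrak{g}$ is stable under the map $\grad' f \mapsto \grad' \overline{f}$, i.e. stable under complex conjugation of potentials up to the obvious identifications. This reductiveness-type statement is what forces the Lie algebra to be the complexification of the compact real form $\mathfrak{k}$ of Hamiltonian Killing fields, hence reductive. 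The decomposition of $\ker\mathcal{D}$ into real and imaginary parts is the technical heart: one integrates $\mathcal{D} f = 0$ against $\overline f$, uses the explicit Weitzenb\"ock-type formula for $\mathcal{D}$ coming from the Cahen--Gutt operator, and isolates the terms; the cross-terms between $f_1$ and $f_2$ must be shown to vanish after integration by parts.

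The third step is exactly where the hypothesis \emph{non-negative Ricci curvature} is used, and this is the main obstacle. In the cscK case (Matsushima), no curvature assumption is needed because the relevant identity is exact; here, by contrast, the Cahen--Gutt operator $\mathcal{D}$ differs from the "clean" operator by a term involving the Ricci curvature (or more precisely a contraction of the curvature tensor coming from the symplectic/Levi-Civita connection entering Fedosov's construction), and the integration-by-parts identity produces a term of the shape $\int_M \Ric(\grad' f_2, \overline{\grad' f_2})\,\omega^m$ (or a similar Ricci-weighted norm) with a definite sign only when $\Ric \ge 0$. So the argument runs: from $\mathcal{D} f = 0$ and the Weitzenb\"ock identity one obtains an equality of the form $0 = \|\text{(some first- or second-order expression in } f_2)\|^2 + \int_M \Ric(\cdots)\,\omega^m$, and non-negativity of Ricci forces each term to vanish, whence $\grad' f_2$ (and then $\grad' f_1$) is holomorphic. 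I expect the delicate points to be: (i) getting the precise form of the lower-order curvature term in $\mathcal{D}$ right — this requires unwinding La Fuente-Gravy's formula for the linearized Cahen--Gutt moment map and the Fedosov connection, and is the computation most likely to hide a sign or a factor; and (ii) checking that the "square" term really is a genuine nonnegative quantity (a norm of $\bar\partial$ of something) rather than merely a quadratic form of indefinite sign. Once both are in hand, the reductiveness of $\mathfrak{g}$ follows by the standard argument that $\mathfrak{g} = \mathfrak{k} \oplus \sqrt{-1}\,\mathfrak{k}$ with $\mathfrak{k}$ the Lie algebra of a compact group, and the final sentence of the theorem is immediate from La Fuente-Gravy's result that closedness of $\ast_\nabla$ implies $\mu(\nabla)$ constant.
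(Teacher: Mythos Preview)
Your overall strategy --- define a Lichnerowicz-type operator whose kernel is the potential space $\mathfrak e$ of $\mathfrak g$, show that when $\mu(\nabla)$ is constant this kernel is closed under $f\mapsto\bar f$, and conclude $\mathfrak g=\mathfrak i(M)\otimes\bfC$ --- is correct and is also the skeleton of the paper's argument. The paper in fact proves more: it develops a Calabi-type theory for \textit{Cahen--Gutt extremal} metrics (those with $\grad'\mu(\nabla)$ holomorphic), computes the Hessian of $\Phi(J)=\int_M\mu(\nabla^J)^2\omega_m$ via Wang's formal moment-map argument, and obtains a full decomposition $\mathfrak g=\bigoplus_{\lambda\ge0}\mathfrak g_\lambda$; Theorem~\ref{reductiveness} is then the special case $\mathfrak g=\mathfrak g_0$. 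For constant $\mu$ your direct Matsushima route is legitimate and more economical, but several of your expectations about the operator are off and would derail the computation if not corrected.

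First, the relevant operator is \textit{sixth} order, not fourth: from the moment-map formula together with the explicit form of $\underline{L_{X_f}\nabla^J}$ (Lemma~\ref{main lemma}) one is led to
\[
(h,\mathcal L f)_{L^2}=3\,(\nabla'\nabla''\nabla'' h,\nabla'\nabla''\nabla'' f)_{L^2}-(\nabla''\nabla''\nabla'' h,\nabla''\nabla''\nabla'' f)_{L^2},
\]
which is self-adjoint but \textit{not} of the form $P^\ast P$; the minus sign is real, and your worry (ii) is exactly the crux. Second, the Ricci term enters one order higher than you anticipate: the identity $(-\nabla^i\nabla_i+\nabla^{\bar i}\nabla_{\bar i})\nabla_{\bar j}\nabla_{\bar k}f=2R^{\bar\ell}{}_{\bar j}\nabla_{\bar k}\nabla_{\bar\ell}f$ turns the indefinite form above into $2\|\nabla'\nabla''\nabla'' f\|^2$ plus a Ricci-weighted norm of $\nabla''\nabla'' f$ (not of $\grad'f$), so $\Ric\ge0$ yields $(f,\mathcal L f)\ge2\|\nabla'\nabla''\nabla'' f\|^2$ and hence $\ker\mathcal L=\mathfrak e$. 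Third --- and this is the mechanism you do not isolate --- the reason the kernel is conjugation-stable is the equivariance identity $(\overline{\mathcal L}-\mathcal L)f=\sqrt{-1}\{f,\mu(\nabla)\}$ (Lemma~\ref{Poisson} in the paper), which drops out of the moment-map property; when $\mu$ is constant the Poisson bracket vanishes, $\mathcal L=\overline{\mathcal L}$ is a real operator, and the real/imaginary split of potentials is immediate with no cross-term chase needed.
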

To show this we define Cahen--Gutt version of extremal K\"ahler metrics and prove a similar structure theorem as 
the Calabi extremal K\"ahler metrics.
 The strategy of the proof of the structure theorem for Cahen--Gutt extremal K\"ahler manifold is to use the formal 
finite dimensional argument for the Hessian formula of the squared norm of the moment map
given by Wang \cite{Lijing06}. The merit of Wang's argument is that once the suitable modification of the 
Lichnerowicz operator is made we can apply his formal argument without using the explicit expression of
the modified Lichnerowicz operator. This strategy has been used previously for perturbed extremal K\"ahler metrics
in \cite{futaki07.1} and for conformally K\"ahler, Einstein-Maxwell metrics in \cite{FO_reductive17}. 

This paper is organized as follows. In section 2 we recall Cahen--Gutt moment map. We show in K\"ahler situation 
an explicit expression
of the Lie derivative of the connection by a Hamiltonian vector field (Lemma \ref{main lemma}). 
From this lemma we see
that the Lie derivative of the connection by a Hamiltonian vector field vanishes if and only if the Hamiltonian vector
field is a holomorphic Killing vector field. Using the Cahen--Gutt moment map formula we reprove the result of
La Fuente-Gravy that the Lie algebra character is independent of the choice of K\"ahler metric in the fixed K\"ahler class.
But our set-up is $\omega$-fixed and $J$-varying, and thus what we prove is independence of the choice of $J$.
In section 3 we give an alternate proof of Lemma \ref{main lemma}. 
The computations in this section are used in section 4. 
In section 4 we apply Wang's formal argument to prove the structure theorem for Cahen--Gutt extremal K\"ahler manifolds.
As we will only use the fact that closedness of the Fedosov star product $\ast_\nabla$ 
implies that $\mu(\nabla)$ is constant, 
we will not reproduce the detailed account on closedness of Fedosov star product. We expect an 
interested reader will refer 
to La Fuente-Gravy's articles \cite{La Fuente-Gravy 2016}, \cite{La Fuente-Gravy 2016_2} for it.

\vspace{0.2cm}

\noindent
Acknowledgement: The authors are grateful to the referee for careful reading and pointing out the necessity of the curvature condition for the proof of reductiveness.
\section{Cahen--Gutt moment map.}
Let $(M,\omega)$ be a symplectic manifold of dimension $2m$. 
A symplectic connection $\nabla$ is a torsion free affine connection such that $\nabla\omega = 0$.
There always exists a symplectic connection on any symplectic manifold, see e.g. \cite{BCGRS}, 
section 2.1. Unlike the Levi-Civita connection on a Riemannian manifold, a symplectic connection 
is not unique on a symplectic manifold. Given two symplectic connections $\nabla$ and $\nabla^\prime$, we write the difference by $S$:
$$\nabla_X Y - \nabla^\prime_X Y = S(X,Y).$$
Then $\omega(S(X,Y),Z)$ is totally symmetric in $X$, $Y$ and $Z$. Conversely, if $\nabla$ is a symplectic connection
and $\omega(S(X,Y),Z)$ is totally symmetric, then $\nabla^\prime := \nabla + S$ is a 
symplectic connection, see \cite{BCGRS}, 
section 2.1. 
In the geometry of symmplectic connections, $\omega = \omega_{ij}dx^i \wedge dx^j$ and $(\omega^{ij}) = (\omega_{ij})^{-1}$ are used to raise and lower
the indices, and we write
\begin{equation}
\underline{S} = \underline{S}_{ijk}\,dx^i \otimes dx^j \otimes dx^k 
\end{equation}
for
\begin{equation}
S = S_{ij}{}^k\, dx^i \otimes dx^j \otimes \frac{\partial}{\partial x^k}
\end{equation}
with $\underline{S}_{ijk} = S_{ij}{}^\ell\omega_{\ell k}$. With this notation, $\underline{S}_{ijk}$ is symmetric in
$i$, $j$ and $k$.
Thus, on a symplectic manifold $(M,\omega)$, the space of
symplectic connections, denoted by $\mathcal E (M,\omega)$, is an affine space modeled on the set of all smooth sections
$\Gamma(S^3(T^\ast M))$ of symmetric covariant 3-tensors. Thus we may identify $\mathcal E (M,\omega)$ as 
$$ \mathcal E (M,\omega) \cong \nabla + \Gamma(S^3(T^\ast M)).$$

From now on we assume $M$ is a closed manifold. 
On $\mathcal E (M,\omega)$ there is a natural symplectic structure $\Omega^{\mathcal E}$
defined at $\nabla$ given by
\begin{equation}\label{symp str}
 \Omega^{\mathcal E}_\nabla(\underline{A},\underline{B}) = 
\int_M \omega^{i_1j_1}\omega^{i_2j_2}\omega^{i_3j_3}
\underline{A}_{i_1i_2i_3}\,\underline{B}_{j_1j_2j_3}\ \omega_m
\end{equation}
for $\underline{A},\ \underline{B} \in T_\nabla \mathcal E(M,\omega) \cong \Gamma(S^3(T^\ast M))$
where $\omega_m := \frac{\omega^m}{m!}$. Since
$\Omega^{\mathcal E}_\nabla$ is independent of $\nabla$ we may omit $\nabla$ and write
$\Omega^{\mathcal E}$. 
There is a natural action of the group of symplectomorphisms (i.e. symplectic diffeomorphisms) 
of $(M,\omega)$ on
$\mathcal E(M,\omega)$, which is given for a symplectomorphism $\varphi$ by
$$ (\varphi(\nabla))_X Y = \varphi_\ast (\nabla_{\varphi_\ast^{-1} X} \varphi_\ast^{-1} Y)$$
for any $\nabla \in \mathcal E(M,\omega)$ and any smooth vector fields  $X$ and $Y$ on $M$.
This action preserves the symplectic structure $\Omega^{\mathcal E}$ 
on $\mathcal E(M,\omega)$. 
In particular, the group $\mathrm{Ham}(M,\omega)$ of Hamiltonian diffeomorphisms acts on
$\mathcal E(M,\omega)$ as symplectomorphisms. 
Let $X_f$ be a Hamiltonian vector field on $M$ for a smooth function $f$ on $M$, that is
\begin{equation}\label{Hamiltonian}
 i(X_f) \omega = df.
 \end{equation}
Then the induced infinitesimal action of $- X_f$ on $\mathcal E(M,\omega)$ is computed as
\begin{eqnarray}\label{tangent}
(L_{X_f}\nabla)_Y Z
&=& [X_f,\nabla_Y Z] - \nabla_{[X_f, Y]} Z - \nabla_Y [X_f,Z]\nonumber\\
&=& R^\nabla(X_f,Y)Z + (\nabla\nabla X_f )(Y,Z)
\end{eqnarray}
where $R^\nabla$ is the curvature tensor of $\nabla$, i.e. $R^\nabla(X,Y)Z = \nabla_X\nabla_Y Z - \nabla_Y\nabla_X Z - \nabla_{[X,Y]} Z$, and
$(\nabla\nabla X )(Y,Z) = \nabla_Y\nabla_Z X - \nabla_{\nabla_Y Z} X$. 
For $\nabla \in \mathcal E(M,\omega)$ we put
\begin{eqnarray}\label{moment map}
\mu(\nabla) &=& \nabla_p\nabla_q \mathrm{Ric}(\nabla)^{pq} \nonumber\\
&& \quad - \frac12 \mathrm{Ric}(\nabla)_{pq}\mathrm{Ric}(\nabla)^{pq}
+ \frac14 \mathrm{R}(\nabla,\omega)_{pqrs}\mathrm{R}(\nabla,\omega)^{pqrs}
\end{eqnarray}
where 
\begin{equation}\label{curvature}
\mathrm{R}(\nabla,\omega)(X,Y,Z,W) = \omega (R(X,Y)Z, W)
\end{equation}
and 
$$ \mathrm{Ric}(X,Y) = - \mathrm{tr} ( Z \mapsto R(X,Z)Y).$$
\begin{thm}[Cahen--Gutt \cite{CahenGutt}]\label{CahenGutt}
The functional $\mu$ on $\mathcal E(M,\omega)$ gives a
moment map
for the action of $\mathrm{Ham}(M,\omega)$.
\end{thm}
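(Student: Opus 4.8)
The plan is to verify the two defining properties of a moment map: $\mathrm{Ham}(M,\omega)$-equivariance of $\mu$, and the infinitesimal identity $d\mu_f = i_{\sigma_f}\Omega^{\mathcal E}$ for every $f$. Here we identify the Lie algebra of $\mathrm{Ham}(M,\omega)$ with $C^\infty(M)/\mathbf R$ (equivalently, with the functions of vanishing average), set $\mu_f(\nabla) := \int_M f\,\mu(\nabla)\,\omega_m$, and write $\sigma_f$ for the fundamental vector field on $\mathcal E(M,\omega)$ generated by $f$; by \eqref{tangent} one has $\sigma_f|_\nabla = -L_{X_f}\nabla$, up to the sign convention fixed there.

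Equivariance is the easy half. Since the density $\mu(\nabla)$ in \eqref{moment map} is built solely from the curvature $R^\nabla$, its Ricci contraction, and contractions with $\omega$ and its inverse, and since all of these are natural under diffeomorphisms while any $\varphi\in\mathrm{Ham}(M,\omega)$ preserves $\omega$, we get $\mu(\varphi(\nabla)) = (\varphi^{-1})^\ast\mu(\nabla)$, hence $\mu_f(\varphi(\nabla)) = \mu_{\varphi^\ast f}(\nabla)$; this is $\mathrm{Ad}^\ast$-equivariance, once one recalls that the coadjoint action of $\mathrm{Ham}(M,\omega)$ on $C^\infty(M)/\mathbf R$ is by pull-back. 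It remains to prove the infinitesimal identity, which is where the specific coefficients $1,-\tfrac12,\tfrac14$ of \eqref{moment map} enter.

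For the infinitesimal identity, fix $\nabla$, take $\underline A\in T_\nabla\mathcal E(M,\omega)\cong\Gamma(S^3(T^\ast M))$, let $A\in\Gamma(S^2(T^\ast M)\otimes TM)$ be obtained by raising the last index of $\underline A$ with $\omega$, and set $\nabla_t = \nabla+tA$. From $\tfrac{d}{dt}\big|_{t=0}R^{\nabla_t}(X,Y)Z = (\nabla_X A)(Y,Z)-(\nabla_Y A)(X,Z)$ one computes the first variations of $\mathrm{Ric}(\nabla_t)$, of $\nabla_p\nabla_q\mathrm{Ric}^{pq}$, of $\mathrm{Ric}_{pq}\mathrm{Ric}^{pq}$ and of $\mathrm R(\nabla,\omega)_{pqrs}\mathrm R(\nabla,\omega)^{pqrs}$; multiplying by $f$, integrating against $\omega_m$, and integrating by parts — legitimate since $\nabla\omega=0$, so $\omega_m$ is $\nabla$-parallel — one moves all covariant derivatives off $A$. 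The terms in which $f$ stays undifferentiated while a derivative falls on $\mathrm{Ric}(\nabla)$ or $R^\nabla$ must cancel among themselves, because the right-hand side $\Omega^{\mathcal E}_\nabla(\sigma_f|_\nabla,\underline A)$ — in view of \eqref{tangent} and of \eqref{Hamiltonian}, which exhibits $X_f$ as $\omega^{-1}(df)$ — carries only the contribution $\nabla\nabla X_f$ (three derivatives of $f$) and the contribution $R^\nabla(X_f,\cdot)\cdot$ (one derivative of $f$), and nothing with $f$ undifferentiated; this cancellation is precisely what the coefficients $1,-\tfrac12,\tfrac14$ are engineered to produce, and it rests on the (symplectic) Bianchi identities. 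What survives then reassembles, again through \eqref{tangent} and \eqref{Hamiltonian}, into $\int_M\langle\underline{L_{X_f}\nabla},\underline A\rangle\,\omega_m$ up to sign — that is, into $\Omega^{\mathcal E}_\nabla(\sigma_f|_\nabla,\underline A)$ in the sense of \eqref{symp str} — which is the moment-map identity. (That $\underline{L_{X_f}\nabla}$ is totally symmetric, as it must be for this pairing to be the one in \eqref{symp str}, reflects the fact that $\mathrm{Ham}(M,\omega)$ acts on $\mathcal E(M,\omega)$ preserving $\mathcal E(M,\omega)$; it can also be read off directly from \eqref{tangent} using Bianchi.)

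\textbf{Main obstacle.} The delicate point is the bookkeeping in the integration-by-parts step: one must track the third-order terms in $A$ as well as the cross terms produced by varying $\mathrm{Ric}_{pq}\mathrm{Ric}^{pq}$ and $\mathrm R(\nabla,\omega)_{pqrs}\mathrm R(\nabla,\omega)^{pqrs}$, and verify the cancellation of all terms with $f$ undifferentiated by repeated use of the Bianchi identities and the total symmetry of $\underline A$; getting the signs and normalizations right throughout is where the real work lies. The conceptual content — that $\mu$ in \eqref{moment map} is exactly the combination whose linearization, integrated against $f$, reproduces the $\Omega^{\mathcal E}$-pairing with $\nabla\nabla X_f + R^\nabla(X_f,\cdot)\cdot = L_{X_f}\nabla$ — is transparent from the outset.
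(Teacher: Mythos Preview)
The paper does not actually prove this theorem: it is quoted from Cahen--Gutt \cite{CahenGutt}, and the text following the statement simply records the characterizing identity \eqref{moment2} without derivation. Your outline --- equivariance by naturality, then the infinitesimal identity via first variation of the curvature, integration by parts, and Bianchi cancellations --- is the standard route and is consistent with the formula \eqref{moment2} the paper states; in that sense your approach and the paper's are the same, except that you sketch more than the paper does.

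One caution on your write-up: the sentence arguing that the terms with $f$ undifferentiated ``must cancel among themselves, because the right-hand side \ldots carries only'' derivatives of $f$ is circular as phrased --- you are trying to \emph{prove} that the left side equals the right side, so you cannot invoke the form of the right side to infer cancellations on the left. You evidently understand this, since you immediately add that the cancellation is what the coefficients are engineered to produce via Bianchi; but as written the logic reads backwards. If you want this to stand as a proof rather than a sketch, you will have to actually perform the computation you flag as the ``main obstacle'': vary each of the three summands in \eqref{moment map}, integrate by parts, and exhibit the cancellations explicitly. As it stands, your proposal is an honest outline that correctly identifies where the work lies but does not do it --- which, to be fair, is also true of the paper.
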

This follows from the formula
\begin{eqnarray}\label{moment2}
\left.\frac{d}{dt}\right|_{t=0} \int_M \mu(\nabla + tA)\,f\,\omega_m =
\Omega^{\mathcal E}(\underline{L_{X_f}\nabla}, \underline{A}).
\end{eqnarray}
Note from \eqref{tangent} that
\begin{equation}\label{Cahen-Gutt2}
\underline{L_X\nabla} = (X^sR(\nabla,\omega)_{squt} + \nabla_q\nabla_u X^s\,\omega_{st})\,dx^q \otimes dx^u \otimes dx^t.
\end{equation}

Now we assume that $M$ is a compact K\"ahler manifold and that $\omega$ is a fixed symplectic form. We set
$$ \mathcal J(M,\omega) = \{ J\ \text{integrable complex\ structure}\ |\ (M,\omega, J)\ \text{is\ a\ K\"ahler\ manifold.}\}$$
La Fuente-Gravy \cite{La Fuente-Gravy 2016}, \cite{La Fuente-Gravy 2016_2} considered the {\it Levi-Civita map}
$lv : \mathcal{J}(M,\omega) \to \mathcal E(M,\omega)$ sending $J$ to the Levi-Civita connection $\nabla^J$
of the K\"ahler manifold $(M,\omega,J)$. 
The following is a key lemma to this paper.
\begin{lem}\label{main lemma}
If we choose local holomorphic coordinates $z^1, \cdots, z^m$ then
for any smooth function $f$ we have
\begin{eqnarray}\label{infinitesimal}
\underline{L_{X_f}\nabla^J} &=& 
 f_{ijk} dz^i \otimes dz^j \otimes dz^k + f_{\bari\barj\bark} dz^\bari \otimes dz^\barj \otimes dz^\bark \\
&& + f_{ij\bark} dz^i \otimes dz^j \otimes dz^\bark + f_{\bari\barj k} dz^\bari \otimes dz^\barj \otimes dz^k \nonumber\\
&& + f_{ik\barj} dz^i \otimes dz^\barj \otimes dz^k + f_{\bari\bark j} dz^\bari \otimes dz^j \otimes dz^\bark \nonumber\\
&& + f_{jk\bari} dz^\bari \otimes dz^j \otimes dz^k + f_{\barj\bark i} dz^i \otimes dz^\barj \otimes dz^\bark \nonumber
\end{eqnarray}
where the lower indices of $f$ stand for the covariant derivatives, e.g. $f_{ij\bark} = \nabla_\bark \nabla_j \nabla_i f$.
\end{lem}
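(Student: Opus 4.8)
The plan is to compute the Lie derivative $L_{X_f}\nabla^J$ directly in local holomorphic coordinates, using formula \eqref{tangent}, i.e. $(L_{X_f}\nabla)_Y Z = R^\nabla(X_f,Y)Z + (\nabla\nabla X_f)(Y,Z)$, and then lower the last index with $\omega$ to pass to the totally-index-down tensor $\underline{L_{X_f}\nabla^J}$. The key structural input is that for a K\"ahler manifold the Levi-Civita connection is the Chern connection: in holomorphic coordinates the only nonzero Christoffel symbols are $\Gamma^k_{ij}$ and $\Gamma^{\bar k}_{\bar i\bar j}$, the curvature tensor $R^\nabla$ has as its only nonzero components those of mixed type $R_{i\bar j k}{}^{\ell}$ (equivalently $R(\nabla,\omega)_{i\bar j k\bar\ell}=-\partial_i\partial_{\bar j}g_{k\bar\ell}+\ldots$, totally determined by the metric), and the Hamiltonian vector field of a real function $f$ splits as $X_f = X_f^{(1,0)} + X_f^{(0,1)}$ with $X_f^i = -\sqrt{-1}\,g^{i\bar j}f_{\bar j}$ up to a constant and conjugate for the $(0,1)$ part.

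First I would substitute $X=X_f$ into \eqref{Cahen-Gutt2}, writing $\underline{L_{X_f}\nabla^J} = (X_f^s R(\nabla,\omega)_{squt} + \nabla_q\nabla_u X_f^s\,\omega_{st})\,dx^q\otimes dx^u\otimes dx^t$ and then sorting all terms by the holomorphic/antiholomorphic type of the free indices $q,u,t$. The second-derivative term $\nabla_q\nabla_u X_f^s\,\omega_{st}$: since $\omega_{st}$ pairs a holomorphic with an antiholomorphic index, and since $X_f^s = -\sqrt{-1}g^{s\bar a}f_{\bar a}$ contracts with $g^{-1}$, each appearance of $\nabla\nabla X_f$ with $\omega$ turns into (up to sign/constant) a third covariant derivative of $f$; e.g. $\nabla_i\nabla_j X_f^{\bar k}\,\omega_{\bar k \ell}$-type terms become $f_{ij\bar\ell}$-type terms after using $\nabla g = 0$. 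The curvature term $X_f^s R(\nabla,\omega)_{squt}$ only survives when $s,q,u,t$ are of mixed type (two holomorphic, two antiholomorphic), and here I would use the second Bianchi / K\"ahler symmetry of $R(\nabla,\omega)$ together with the Ricci identity to rewrite the curvature contracted against $X_f^s$ as the commutator of covariant derivatives acting on $f$, which combines with the ``wrong-order'' part of the $\nabla\nabla X_f$ term to reconstitute a genuine symmetric third covariant derivative $f_{abc}$ with all three indices in the natural order. The normalization of $\omega$ and the factor $\sqrt{-1}$ should be chosen (as is standard) so that all the stray constants cancel and one is left precisely with the eight listed monomials $f_{ijk}, f_{\bar i\bar j\bar k}, f_{ij\bar k},\ldots$, each appearing once.

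The main obstacle is the curvature term: a priori $X_f^s R(\nabla,\omega)_{squt}$ is not manifestly a third derivative of $f$, and one must show that the curvature contribution is exactly what is needed to symmetrize the Hessian-of-$X_f$ term into a totally covariant third derivative of $f$ in each of the eight index-type sectors. Concretely, one writes $\nabla_q\nabla_u X_f^s = \nabla_q\nabla_u(-\sqrt{-1}g^{s\bar a}f_{\bar a}) = -\sqrt{-1}g^{s\bar a}\nabla_q\nabla_u\nabla_{\bar a}f$, and then reorders $\nabla_q\nabla_u\nabla_{\bar a}f$ into the order matching $f_{\ldots}$ in the statement; the reordering produces exactly curvature terms $R\cdot df$ which, after contracting with $\omega_{st}$ and $g^{s\bar a}$, match $-X_f^s R(\nabla,\omega)_{squt}$ term by term. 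This is a careful but ultimately routine bookkeeping once the type decomposition is set up; the real content is organizing the eight sectors and checking the signs. (In fact, as the paper notes, a second, more conceptual derivation of this same lemma is given in Section 3, which the reader may consult for an independent check.)
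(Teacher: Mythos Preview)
Your proposal is correct and follows essentially the same route as the paper: compute $\underline{L_{X_f}\nabla^J}$ from the formula $\underline{L_{X_f}\nabla} = (X_f^s R(\nabla,\omega)_{squt} + \nabla_q\nabla_u X_f^s\,\omega_{st})\,dx^q\otimes dx^u\otimes dx^t$, split into holomorphic/antiholomorphic types, and use the Ricci identity so that the curvature contribution is exactly the commutator needed to reorder $\nabla_i\nabla_j f_{\bar k}$ into $\nabla_{\bar k}\nabla_i\nabla_j f = f_{ij\bar k}$. The only economy the paper adds is that it computes just the $(i,j,k)$ component (where the curvature term vanishes since $R_{ijCD}=0$ on a K\"ahler manifold) and the $(i,j,\bar k)$ component explicitly, and then invokes the already-known total symmetry of $\underline{L_{X_f}\nabla}$ together with complex conjugation to obtain the remaining six terms without further calculation; you can shorten your argument the same way rather than working through all eight sectors.
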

\begin{proof}
Write $\partial_i = \partial/\partial z^i$, 
$\barpartial_i = \partial/\partial \overline{z^i}$ for short. We use the standard tensor calculus notations in
K\"ahler geometry. Thus, $R_{ABCD}$ denotes the K\"ahlerian curvature tensor with $A,\ B,\ C, D$ running from $1, \cdots, m, \bar{1}, \cdots, \bar{m}$ and we use the metric tensor or its inverse to lower and raise indices.
First of all, from \eqref{Hamiltonian} we have 
\begin{equation}\label{X_f}
X_f = -J\mathrm{grad}f = -\sqrt{-1} f^\ell \partial_\ell + \sqrt{-1} f^\barl \barpartial_\ell.
\end{equation}
Secondly, from \eqref{curvature} we have
\begin{equation}\label{curvature2}
R(\nabla,\omega)_{ABi\barj} = -\sqrt{-1} R_{ABi\barj}.
\end{equation}
From \eqref{Cahen-Gutt2}, \eqref{X_f} and \eqref{curvature2} we obtain
\begin{equation}\label{Cahen-Gutt3}
\underline{L_X\nabla}(\partial_i,\partial_j, \partial_k) = f_{ijk}
\end{equation}
since $R_{ijCD} = 0$ on K\"ahler manifolds, and
\begin{eqnarray}
\underline{L_X\nabla}(\partial_i,\partial_j, \partial_\bark) &=& - R_{\barl i\bark j} f^\barl + \nabla_i\nabla_j  f_{\bark} \label{Cahen-Gutt4}\\
&=&  - R_{\bark i\barl j} f^\barl + \nabla_i\nabla_{\bark}  f_j \nonumber\\
&=& \nabla_\bark\nabla_i f_j = f_{ji\bark} = f_{ij\bark}. \nonumber
\end{eqnarray}
Since we know $\underline{L_X\nabla}$ is totally symmetric, taking the complex conjugates of \eqref{Cahen-Gutt3} and \eqref{Cahen-Gutt4} we obtain
\eqref{infinitesimal}.

Alternatively, one may compute
\begin{eqnarray*}
&&R(-\sqrt{-1} f^\ell \partial_\ell + \sqrt{-1} f^\barl \barpartial_\ell,\partial_i + \barpartial_i)(\partial_j + \barpartial_j)\\
&& \quad + \nabla_{\partial_i + \barpartial_i}\,\nabla_{\partial_j + \barpartial_j} (-\sqrt{-1} f^\ell \partial_\ell + \sqrt{-1} f^\barl \barpartial_\ell),
\end{eqnarray*}
from which one obtains \eqref{infinitesimal}, and the result shows that $\underline{L_X\nabla}$ is totally symmetric.
\end{proof}
An alternate proof of Lemma \ref{main lemma} is given in the next section, see \eqref{nablader2},
where we use an explicit description of the differential $lv_\ast : T_J \mathcal J (M,\omega) \to
T_{\nabla^J}\mathcal E(M,\omega)$, see Lemma \ref{S^3}.

If $lv^\ast \Omega^{\mathcal E}$ is non-degenerate, $lv^\ast\mu$ gives the moment map for the action of
$\mathrm{Ham}(M,\omega)$ with respect to the symplectic structure $lv^\ast \Omega^{\mathcal E}$.
However the nondegeneracy of $lv^\ast \Omega^{\mathcal E}$ is not obvious, and in \cite{La Fuente-Gravy 2016}, Proposition 17, a sufficient condition for the non-degeneracy of $lv^\ast \Omega^{\mathcal E}$ is given (see also Lemma \ref{Ric} and Remark \ref{Prop 17} of the present paper).
Disregarding this difficulty, 
notice that this symplectic structure is different from the one used in Donaldson \cite{donaldson97} and
Fujiki \cite{fujiki92}. As we noted in the introduction, by choosing different symplectic structures on
$\mathcal J(M,\omega)$, we obtain similar results for other nonlinear geometric problems just as the cscK problem, see \cite{futaki06}, \cite{futaki07.1}, 
\cite{FO_reductive17}. Each case of them should be studied in terms K-stability.

\begin{prop}\label{nondegenerate}For a real smooth function $f$, 
$L_{X_f}\nabla^J = 0$ if and only if $L_{X_f}J = 0$. In this case, $X_f$ is a holomorphic Killing vector field.
\end{prop}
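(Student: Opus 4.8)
The plan is to translate both sides of the equivalence into conditions on the covariant Hessian $\nabla\nabla f$, and then to close the argument with one integration by parts. By Lemma~\ref{main lemma}, $\underline{L_{X_f}\nabla^J}$ is the totally symmetric $3$-tensor whose components are the third covariant derivatives listed in \eqref{infinitesimal}; hence $L_{X_f}\nabla^J=0$ is equivalent to
$$\nabla_k\nabla_j\nabla_i f=0\qquad\text{and}\qquad\nabla_\bark\nabla_j\nabla_i f=0\qquad\text{for all }i,j,k,$$
the remaining six blocks of \eqref{infinitesimal} being index rearrangements or complex conjugates of these two. On the other hand, $X_f$ is a real vector field whose $(1,0)$-part is $-\sqrt{-1}\,f^\ell\partial_\ell$ by \eqref{X_f}, so $L_{X_f}J=0$ is equivalent to $f^\ell\partial_\ell$ being a holomorphic vector field; since the metric is K\"ahler this in turn says $\nabla_\barj\nabla_\bark f=0$ for all $j,k$, equivalently (taking complex conjugates, $f$ being real) $\nabla_i\nabla_j f=0$ for all $i,j$, i.e.\ the $(2,0)$-part of $\nabla\nabla f$ vanishes. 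Thus the task reduces to showing: the $(2,0)$-part of $\nabla\nabla f$ vanishes if and only if $\nabla_k\nabla_j\nabla_i f=0$ and $\nabla_\bark\nabla_j\nabla_i f=0$ for all indices.

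For the implication $L_{X_f}J=0\Rightarrow L_{X_f}\nabla^J=0$, I would observe that each term of \eqref{infinitesimal} is a single further covariant derivative of the $(2,0)$- or the $(0,2)$-part of $\nabla\nabla f$: for instance $\nabla_k\nabla_j\nabla_i f=\nabla_k(\nabla_j\nabla_i f)$, $\nabla_\bark\nabla_j\nabla_i f=\nabla_\bark(\nabla_j\nabla_i f)$, $\nabla_\barj\nabla_k\nabla_i f=\nabla_\barj(\nabla_k\nabla_i f)$, and the barred blocks are complex conjugates of such expressions. (Here one uses that $\nabla\nabla f$ is symmetric in its two holomorphic, resp.\ antiholomorphic, indices, because $\nabla$ is torsion free and $f$ is a function, so the relevant curvature term vanishes on the K\"ahler manifold.) Hence if both the $(2,0)$- and $(0,2)$-parts of $\nabla\nabla f$ vanish, all eight terms of \eqref{infinitesimal} vanish.

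The substantive direction is the converse, and the plan there is an integration by parts that uses that $f$ is a global real function on the compact manifold $M$; a holomorphic section of $\mathrm{Sym}^2$ of the holomorphic cotangent bundle need not vanish in general, but one arising from a global real Hessian does. Assume $L_{X_f}\nabla^J=0$, so in particular $\nabla_\bark\nabla_j\nabla_i f=0$ for all $i,j,k$. Using that $\overline{\nabla_p\nabla_q f}=\nabla_\barq\nabla_\barp f$ (because $f$ is real and the $(0,2)$-Hessian is symmetric), I would compute
\begin{eqnarray*}
\int_M g^{i\barp}g^{j\barq}\,(\nabla_i\nabla_j f)\,\overline{(\nabla_p\nabla_q f)}\;\om_m
&=& \int_M g^{i\barp}g^{j\barq}\,(\nabla_i\nabla_j f)\,(\nabla_\barq\nabla_\barp f)\;\om_m\\
&=& -\int_M g^{i\barp}g^{j\barq}\,(\nabla_\barq\nabla_j\nabla_i f)\,(\nabla_\barp f)\;\om_m\\
&=& 0,
\end{eqnarray*}
where the second equality is integration by parts (the two integrands differ by a total divergence, which has zero integral over the compact manifold $M$ since $\nabla\om_m=0$) and the last equality is the hypothesis $\nabla_\barq\nabla_j\nabla_i f=0$. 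Hence $\nabla_i\nabla_j f\equiv 0$ for all $i,j$, so by the first paragraph $L_{X_f}J=0$. Finally, $X_f$ is Hamiltonian, hence $L_{X_f}\om=0$; together with $L_{X_f}J=0$ and $g=\om(\,\cdot\,,J\,\cdot\,)$ this gives $L_{X_f}g=0$, so $X_f$ is a holomorphic Killing vector field.

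The only real obstacle I anticipate is bookkeeping: matching the eight blocks of \eqref{infinitesimal} to the two Hessian parts, and carrying out the integration by parts while keeping straight the ordering subtlety that $\nabla_\barq\nabla_i\nabla_j f=\nabla_\barq\nabla_j\nabla_i f$ (so no curvature term intervenes) precisely because $\nabla_j\nabla_i f$ is the Hessian of a function. The analytic content is merely Stokes' theorem on a compact manifold; it is also worth noting that the converse uses only the vanishing of the mixed components $\nabla_\bark\nabla_j\nabla_i f$, not of the holomorphic ones $\nabla_k\nabla_j\nabla_i f$.
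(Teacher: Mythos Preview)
Your proof is correct and follows essentially the same route as the paper's: both invoke Lemma~\ref{main lemma} to see that every component of $\underline{L_{X_f}\nabla^J}$ is a covariant derivative of the $(2,0)$- or $(0,2)$-Hessian of $f$, which gives the easy implication, and both obtain the converse by the same integration by parts, using the vanishing of the mixed third derivatives $f_{ij\bark}$ (equivalently $f_{\bari\barj k}$) to force $\int_M|\nabla''\nabla'' f|^2\,\omega_m=0$. The paper phrases the holomorphy criterion via the identity $L_X J = 2\sqrt{-1}\nabla''X' - 2\sqrt{-1}\nabla'X''$ (its equation~\eqref{LieJ}) rather than your direct argument, and it works with the $(0,2)$-Hessian where you work with the $(2,0)$-Hessian, but since $f$ is real these are complex conjugates and the arguments are interchangeable.
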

\begin{proof} We write $\nabla$ instead of $\nabla^J$ for notational simplicity. 
By Lemma \ref{main lemma}, $L_{X_f}\nabla = 0$ implies 
$\nabla^\prime\nabla^{\prime\prime}\nabla^{\prime\prime}f = 0$. By integration by parts, this shows
$$ \int_M |\nabla^{\prime\prime}\nabla^{\prime\prime}f|^2 \omega_m = 0.$$
 This implies $X_f$ is holomorphic, that is, $L_{X_f}J = 0$ since
for a smooth vector field $X = X^{\prime} + X^{\prime\prime}$ we have
\begin{equation}\label{LieJ}
L_X J = 2\sqrt{-1} \nabla^{\prime\prime} X^{\prime}
 - 2\sqrt{-1} \nabla^{\prime} X^{\prime\prime},
\end{equation}
as shown in Lemma 2.3 in \cite{futaki06}.
Lemma \ref{main lemma} also shows, conversely if $L_{X_f}J = 0$ then $L_{X_f}\nabla = 0$. 
In this case, since $X_f$ preserves $\omega$ it is a Killing vector field. 
This completes the proof of Proposition \ref{nondegenerate}.
\end{proof}
Lemma \ref{main lemma} gives an alternate proof of the following result of La Fuente-Gravy.
We consider $\mathfrak g_{\mathbf R}$ consisting of $\grad^\prime f \in \mathfrak g$
of some real smooth function.
\begin{cor}[La Fuente-Gravy \cite{La Fuente-Gravy 2016_2}]\label{Futaki}
Let $(M, \omega)$ be a compact K\"ahler manifold, and $\mathfrak g_{\mathbf R}$ be the real reduced Lie algebra of holomorphic vector fields. We normalize the Hamiltonian functions $f$ so that $\int_M f\ \omega_m = 0$.
Then 
$$ \mathrm{Fut}(\grad^\prime f) := \int_M \mu(\nabla^J)\, f\ \omega_m $$
is independent of the choice of $J \in \mathcal J(M,\omega)$.
\end{cor}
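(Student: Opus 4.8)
The plan is to show that the derivative of $\mathrm{Fut}(\grad' f)$ along a path of complex structures in $\mathcal{J}(M,\omega)$ vanishes. Since $\mathcal{J}(M,\omega)$ is connected (a standard fact about the space of compatible complex structures on a symplectic manifold), this suffices. So I would pick a smooth path $J_t \in \mathcal{J}(M,\omega)$ with $J_0 = J$, write $\dot{J} = \frac{d}{dt}\big|_{t=0} J_t \in T_J\mathcal{J}(M,\omega)$, and compute $\frac{d}{dt}\big|_{t=0} \int_M \mu(\nabla^{J_t})\, f\, \omega_m$.

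\textbf{Key steps.} First, since $\omega$ is fixed and only $J$ varies, the volume form $\omega_m$ and the normalization $\int_M f\,\omega_m = 0$ are constant along the path, and we may even keep $f$ fixed (any holomorphic vector field that is $\grad'_J f$ need not remain of that form for $J_t$, but for the purpose of differentiating the functional we only need $f$, not its geometric interpretation; the statement is really about the numerical pairing). Thus
\begin{equation*}
\frac{d}{dt}\Big|_{t=0} \mathrm{Fut}(\grad' f) = \int_M \left(\frac{d}{dt}\Big|_{t=0}\mu(\nabla^{J_t})\right) f\, \omega_m.
\end{equation*}
Second, I would use the chain rule: $\frac{d}{dt}\big|_{t=0}\mu(\nabla^{J_t}) = D\mu_{\nabla^J}(lv_\ast \dot J)$, where $lv_\ast \dot J = \frac{d}{dt}\big|_{t=0}\nabla^{J_t} \in T_{\nabla^J}\mathcal{E}(M,\omega)$ is the variation of the Levi-Civita connection, an element of $\Gamma(S^3(T^*M))$ after lowering an index (this is the content of the forthcoming Lemma \ref{S^3}). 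Third, apply the Cahen--Gutt moment map identity \eqref{moment2}: for any $\underline A \in T_{\nabla^J}\mathcal{E}(M,\omega)$,
\begin{equation*}
\int_M D\mu_{\nabla^J}(A)\, f\, \omega_m = \Omega^{\mathcal{E}}\bigl(\underline{L_{X_f}\nabla^J},\, \underline A\bigr).
\end{equation*}
Taking $\underline A = \underline{lv_\ast \dot J}$ gives
\begin{equation*}
\frac{d}{dt}\Big|_{t=0} \mathrm{Fut}(\grad' f) = \Omega^{\mathcal{E}}\bigl(\underline{L_{X_f}\nabla^J},\, \underline{lv_\ast \dot J}\bigr).
\end{equation*}
Fourth — and this is where Lemma \ref{main lemma} enters — I would invoke the hypothesis that $\grad' f$ is a holomorphic vector field, i.e. $\grad' f \in \mathfrak g_{\mathbf R}$, which by Proposition \ref{nondegenerate} (applied to the real function $f$; here one uses that $X_f$ holomorphic is equivalent to $L_{X_f}\nabla^J = 0$, and $\grad' f \in \mathfrak g$ precisely says $\nabla''\nabla' f = 0$, hence $X_f = -J\grad f$ is holomorphic) forces $L_{X_f}\nabla^J = 0$, so $\underline{L_{X_f}\nabla^J} = 0$ and the whole pairing vanishes. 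Therefore $\frac{d}{dt}\big|_{t=0}\mathrm{Fut}(\grad' f) = 0$ for every path, and by connectedness of $\mathcal{J}(M,\omega)$ the quantity is independent of $J$.

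\textbf{Main obstacle.} The delicate point is the bookkeeping around "which object stays fixed as $J$ varies." The Lie algebra $\mathfrak g$ is metric-independent, but the \emph{function} $f$ with $\grad'_J f = $ a given holomorphic vector field does change with $J$; conversely a fixed $f$ need not satisfy $\nabla''_{J_t}\nabla'_{J_t} f = 0$ for $t\neq 0$. The clean way around this is to observe that at $t=0$ we only need $L_{X_f}\nabla^{J_0}=0$, which holds since $\grad'_{J_0} f \in \mathfrak g$; one does not need it for $t\neq 0$. A secondary subtlety is that $\mu$ is genuinely the moment map only when $lv^*\Omega^{\mathcal{E}}$ is nondegenerate, but identity \eqref{moment2} — which is all we use — holds unconditionally as a consequence of \eqref{moment map}, \eqref{tangent} and Theorem \ref{CahenGutt}, so this difficulty is irrelevant here. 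Finally, one should check that $t \mapsto \mu(\nabla^{J_t})$ is smooth and that differentiation under the integral sign is justified, which is routine since everything is given by smooth bidifferential/polynomial expressions in the curvature of $\nabla^{J_t}$.
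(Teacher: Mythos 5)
Your argument is correct and is essentially the proof in the paper: the derivative of $\mathrm{Fut}(\grad^\prime f)$ under a variation of $J$ is rewritten via the moment map identity \eqref{moment2} as $\Omega^{\mathcal E}\bigl(\underline{L_{X_f}\nabla^J},\,\underline{lv_\ast \dot J}\bigr)$, which vanishes because Proposition \ref{nondegenerate} identifies $L_{X_f}\nabla^J=0$ with holomorphy of $X_f$. Your extra remarks about keeping $f$ fixed, differentiating under the integral, and integrating along a connected path only make explicit what the paper's two-line proof leaves implicit.
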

\begin{proof}By the moment map formula \eqref{moment2}, the derivative of $\mathrm{Fut}(\grad^\prime f)$ vanishes
when $L_{X_f}\nabla = 0$. But by Proposition \ref{nondegenerate}, $L_{X_f}\nabla = 0$ is equivalent to $L_{X_f}J = 0$.
This completes the proof of Corollary \ref{Futaki}
\end{proof}
La Fuente-Gravy \cite{La Fuente-Gravy 2016_2} shows that this is an obstruction to the existence of an integrable
complex structure $J$ such that the Levi-Civita connection $\nabla^J$
gives rise to closed Fedosov star product. 
This follows from his observation that the closedness of Fedosov star product implies $\mu(\nabla^J)$ is constant.
He also observed the invariant $ \mathrm{Fut}(\grad^\prime f)$ in Corollary \ref{Futaki} is 
one of the invariants considered in \cite{futaki04-1}. The latter family of invariants includes the standard obstruction
to the existence of K\"ahler-Einstein metrics \cite{futaki83.1}.
In the last section of this paper we will obtain another obstruction for the Levi-Civita connection $\nabla^J$
to give rise to closed Fedosov star product. 

\section{An alternate proof of Lemma \ref{main lemma}.}
In this section we give an alternate proof of Lemma \ref{main lemma}. The results in this section are
used in the next section.
Let $(M,\omega)$ be a compact symplectic manifold of dimension $2m$ with a fixed
symplectic form $\omega$.
Let  $\mathcal J(M,\omega)$ be the set of all $\omega$-compatible integrable complex structures
where $J$ is said to be $\omega$-compatible if $\omega(JX,JY) = \omega(X,Y)$ 
for all vector fields $X$ and $Y$ and $\omega(X,JX) > 0$ for all non-zero $X$.
Thus, for each $J \in \mathcal J(M,\omega)$, the triple $(M, \omega, J)$ determines a K\"ahler
structure. 

Consider $J$ as acting on the cotangent bundle
and decompose the complexified cotangent bundle into holomorphic and anti-holomorphic parts,
i.e. $\pm \sqrt{-1}$-eigenspaces of $J$:
\begin{equation}
T^{\ast}M\otimes\bfC = T_J^{\ast\prime}M \oplus T_J^{\ast\prime\prime}M, 
\qquad T_J^{\ast\prime\prime}M = \overline{T_J^{\ast\prime}M}.
\end{equation}
Take arbitrary $J^{\prime} \in \mathcal J(M,\omega)$, then we also have the decomposition with respect to $J^{\prime}$
\begin{equation}
T^{\ast}M\otimes\bfC = T_{J^{\prime}}^{\ast\prime}M \oplus T_{J^{\prime}}^{\ast\prime\prime}M, 
\qquad T_{J^{\prime}}^{\ast\prime\prime}M = \overline{T_{J^{\prime}}^{\ast\prime}M}.
\end{equation}
If $J^{\prime}$ is sufficiently close to $J$ then $T_{J^{\prime}}^{\ast\prime}M $ 
can be expressed as a graph over $ T_J^{\ast\prime}M$ in the form
\begin{equation}
T_{J^{\prime}}^{\ast\prime}M = \{\ \alpha + \mu(\alpha) \ |\ \alpha \in T_J^{\ast\prime}M\ \}
\end{equation}
for some endomorphism $\mu$ of $T_J^{\ast\prime}M$ into $T_J^{\ast\prime\prime}M$.
 We use the identification of $T_J^{\ast\prime\prime}M$ with $T_J^{\prime}M$ by
 the K\"ahler metric defined by the pair $(\omega, J)$, 
 and then $\mu$ is regarded as
\begin{eqnarray}
 \mu &\in& \Gamma (\mathrm{End}(T_J^{\ast\prime}M, T_J^{\ast\prime\prime}M)) \nonumber \\
 &\cong&  \Gamma (T_J^{\prime}M \otimes T_J^{\ast\prime\prime}M) \cong \Gamma(T_J^{\prime}M\otimes T_J^{\prime}M).
\end{eqnarray}
In the tensor calculus notations this is expressed as
$$ \mu^i{}_{\bark} \mapsto g^{j\bark}\mu^i{}_{\bark}=: \mu^{ij} $$
where we chose a local holomorphic coordinate system 
$(z^1, \cdots, z^m)$ with respect to $J$
and wrote $\omega$ as $\omega = \sqrt{-1}\ g_{i\barj} dz^i \wedge d\overline{z^j}$.
The following is known:
\begin{equation}\label{symmetry}
\mu^{ij} = \mu^{ji},
\end{equation}
see the proof of Lemma 2.1 in \cite{futaki06}.

If $J_t$ is a smooth curve in $\mathcal J(M,\omega)$ with $J_0 = J$ and $\mu(t)$ is the curve in 
$\Gamma (\mathrm{End}(T_J^{\ast\prime}M, T_J^{\ast\prime\prime}M))$
satisfying
$$J_t (\alpha + \mu(t)\alpha) = \sqrt{-1}(\alpha + \mu(t)\alpha)$$ 
with $\dot{\mu}(0) = \mu$, then we have
\begin{equation}\label{J_derivative}
\dot{J}|_{t=0} = 
2\sqrt{-1}\mu - 2\sqrt{-1}\overline{\mu},
\end{equation}
see \cite{futaki06}, pp.353--354.
Let $g_t$ be the K\"ahler metric of $(\omega, J_t)$, i.e. $g_t(\cdot,\cdot) = \omega(\cdot, J_t\cdot)$.
We write the Christoffel symbol in terms of real coordinates
 $$ \Gamma_{t,kj}^i = \frac12 g_t^{i\ell}\left(
\frac{\partial g_{t,\ell j}}{\partial x^k} + \frac{\partial g_{t,\ell k}}{\partial x^j}
- \frac{\partial g_{t,j k}}{\partial x^{\ell}} \right), $$
take the derivative
at $t=0$ and express it in terms of normal coordinates to obtain
the derivative of the covariant derivative
\begin{equation}\label{connection_der}
\dot{\nabla}^i_{kj}dx^k = \dot{\Gamma}_{kj}^i dx^k = \frac12 (\dot g^i{}_{j,k} + \dot g^i{}_{k,j} - \dot g_{jk,}{}^i)dx^k
\end{equation}
where $i$ and $j$ are respectively row and column indices.
Note $g_{t\,ij} = \omega_{ik} J_t^k{}_j$ and that $\omega$ is fixed. Thus we have
$\dot g_{ij} = \omega_{ik} \dot J^k{}_j$.

In local complex coordinates $z^1, \cdots, z^m, \barz^1, \cdots, \barz^m$ giving complex struture $J$
we have
$\omega = \sqrt{-1} g_{i\barj} dz^i \wedge dz^\barj$, $\omega_{\barj i} = - \omega_{i \barj} = - \sqrt{-1} g_{i\barj}$.
It follows from \eqref{J_derivative} that
\begin{equation}\label{gbar_derivative}
\dot{g}_{\bari\,\barj} = \omega_{\bari k} 2\sqrt{-1}\mu^k{}_\barj = 2 \mu_{\bari\,\barj},
\end{equation}
\begin{equation}\label{g_derivative}
\dot{g}_{ij} = \omega_{i \bark} (-2\sqrt{-1})\barmu^\bark{}_j = 2 \barmu_{ij},
\end{equation}
\begin{equation}\label{other_gderivative}
\dot{g}_{\bari\,j} = 0,\ \ \dot{g}_{i\barj} = 0.
\end{equation}

The derivative $\dot{\nabla}$ of the connection is the section of $\mathrm{End}(TM)\otimes T^\ast M \cong T^\ast M \otimes T^\ast M\otimes TM$.
Using $\omega$ a section $A$ of $\mathrm{End}(TM)\otimes T^\ast M \cong T^\ast M \otimes T^\ast M\otimes TM$
can be identified with a section $\underline{A}$ of $\otimes^3 T^\ast M$.
For example, this means
$$ \underline{A}_{\bari jk} =   A^p{}_{jk} \omega_{p\bari}= \sqrt{-1}g_{p\bari} A^p{}_{jk}, \quad
\underline{A}_{ijk} =  A^\barp{}_{jk}\omega_{\barp i}  = -\sqrt{-1}g_{i\barp} A^\barp{}_{jk}.$$

\begin{lem}\label{S^3}
With the identification above, we have
\begin{eqnarray}\label{nablader}
-\sqrt{-1}\underline{\dot{\nabla}} 
&=& \barmu_{ij,k} dz^i\otimes dz^j \otimes dz^k - \mu_{\bari\barj,\bark}dz^\bari\otimes dz^\barj \otimes dz^\bark \\
&&+ \barmu_{ij,\bark} dz^i \otimes dz^j \otimes dz^\bark 
 - \mu_{\bari\barj,k}dz^\bari\otimes dz^\barj \otimes dz^k  \nonumber\\
&& + \barmu_{ik,\barj} dz^i \otimes dz^\barj \otimes dz^k - \mu_{\bari\bark,j} dz^\bari \otimes dz^j \otimes dz^\bark \nonumber\\
&&  + \barmu_{jk,\bari} dz^\bari \otimes dz^j \otimes dz^k  
 - \mu_{\barj\bark,i} dz^i \otimes dz^\barj \otimes dz^\bark   \nonumber
\end{eqnarray}
where the comma $,$ denotes the covariant derivative. In particular, 
 $\underline{\dot{\nabla}}$ is totally symmetric in terms of the three components.
\end{lem}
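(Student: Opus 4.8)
The plan is to compute $\dot{\nabla}$ directly from formula \eqref{connection_der} by substituting the derivatives of the metric tensor in complex coordinates that we have just recorded in \eqref{gbar_derivative}--\eqref{other_gderivative}. First I would note that, by \eqref{connection_der}, $\dot\Gamma^i_{kj}$ is built from three terms $\frac12(\dot g^i{}_{j,k} + \dot g^i{}_{k,j} - \dot g_{jk,}{}^i)$, each obtained by covariantly differentiating $\dot g$ once and then raising an index with the (undeformed) K\"ahler metric $g$; since we are working at the center of normal coordinates, the covariant derivative and the partial derivative agree there, so the formal computation is legitimate. The key structural input is that $g$ is K\"ahler, so the only non-zero components of $\dot g$ are the pure types $\dot g_{ij} = 2\barmu_{ij}$ and $\dot g_{\bari\barj} = 2\mu_{\bari\barj}$, while the mixed components $\dot g_{i\barj}$ vanish; also the metric used to raise/lower indices only pairs a holomorphic with an anti-holomorphic index.

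Next I would organize the computation by the type of the three free indices $(k,j,i)$ of $\dot\nabla$. Lowering the upper index with $\omega$ (which, as recorded in the excerpt, introduces a factor $\pm\sqrt{-1}g$), there are essentially two inequivalent cases up to the symmetries $j\leftrightarrow k$ built into \eqref{connection_der} and complex conjugation: the ``$(\text{hol},\text{hol},\text{hol})$'' case and the ``$(\text{hol},\text{hol},\text{anti-hol})$'' case (together with their conjugates). In the all-holomorphic case only the term $\dot g_{jk,}{}^i$ survives after lowering, and it produces $\barmu_{ij,k}$ (with appropriate symmetrization); in the two-holomorphic-one-antiholomorphic case one finds that the relevant surviving term yields $\barmu_{ij,\bark}$ and $\barmu_{ik,\barj}$ and their placements. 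Tracking the $\sqrt{-1}$ factors and the signs coming from $\omega_{\barj i} = -\omega_{i\barj}$ gives exactly the eight terms displayed in \eqref{nablader}, with the factor $-\sqrt{-1}$ out front. The total symmetry then follows a posteriori by inspecting the resulting expression, exactly as the statement asserts; alternatively one can invoke the abstract fact from Section 2 that $\underline{L_X\nabla}$ is totally symmetric for tangent vectors in the image of $lv_\ast$, but doing it by hand is cleaner here.

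The main obstacle I expect is purely bookkeeping: keeping the index types, the factors of $\sqrt{-1}$, and the signs from $\omega_{i\barj} = \sqrt{-1}g_{i\barj}$ consistent through the three terms of \eqref{connection_der} under all the different type assignments, and making sure the symmetrization in $j,k$ coming from the Christoffel formula is compatible with the claimed coefficients. A secondary subtlety is that \eqref{connection_der} is written in \emph{real} coordinates and normal coordinates, so one must be careful that passing to complex coordinates $z^i,\barz^i$ and using $\dot g_{ij} = \omega_{ik}\dot J^k{}_j$ together with \eqref{J_derivative} is done correctly; once this dictionary is set up the rest is a finite, if tedious, check. No genuinely hard idea is needed beyond the K\"ahler condition that kills the mixed derivatives of $\dot g$.
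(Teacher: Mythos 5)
Your overall strategy --- substituting \eqref{gbar_derivative}--\eqref{other_gderivative} into \eqref{connection_der} and sorting by index type --- is the same as the paper's, but as described it contains a computational error and, more seriously, omits the one genuinely nontrivial input. The error first: in the all-holomorphic case it is not true that only the term $\dot g_{jk,}{}^i$ survives. To produce the component $\underline{\dot\nabla}_{ijk}$ the upper index of $\dot\Gamma$ must be anti-holomorphic (it is paired with $\omega_{\barp i}$), and then all three terms of \eqref{connection_der} are nonzero, giving (up to a common factor) $\barmu_{ij,k}+\barmu_{ik,j}-\barmu_{jk,i}$; the analogous count in the mixed cases produces a twelve-term intermediate expression (the paper's \eqref{nablader1}), not the eight terms of \eqref{nablader}. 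Note also that your single surviving term $-\dot g_{jk,}{}^i$ would yield $-\barmu_{jk,i}$, which even after symmetrization is $-\barmu_{ij,k}$, the negative of the correct coefficient.

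The missing idea is the fact that $\nabla_k\barmu_{ij}$ (and likewise $\nabla_\bark\mu_{\bari\barj}$) is totally symmetric in all three indices --- in particular symmetric under exchanging the covariant-derivative index with a tensor index. This is needed twice: it is what makes four of the twelve terms cancel (e.g.\ $\barmu_{ik,j}=\barmu_{jk,i}$), reducing \eqref{nablader1} to \eqref{nablader}, and it is exactly the content of the asserted total symmetry of \eqref{nablader} itself (symmetry of the first term in its last two slots requires $\barmu_{ij,k}=\barmu_{ik,j}$). So the symmetry cannot "follow a posteriori by inspecting the resulting expression": \eqref{symmetry} only gives symmetry in $i,j$, and the derivative-index symmetry is the nontrivial point. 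The paper obtains it from Proposition 11 of \cite{La Fuente-Gravy 2016}, equivalently from the torsion-freeness of the Levi-Civita connections of the whole family of K\"ahler metrics $g_t$; this is where the integrability of the deformed complex structures enters. Your proposed fallback --- appealing to the abstract total symmetry --- can be made correct in a different form (each $\nabla^{J_t}$ is a symplectic connection for the fixed $\omega$, so $\underline{\nabla^{J_t}-\nabla^{J_0}}$, hence $\underline{\dot\nabla}$, is totally symmetric by the general fact from Section 2), but even that only constrains the symmetrization of the twelve-term expression and does not by itself produce the cancellation needed to reach the displayed eight-term formula. You still need the pointwise symmetry of $\nabla_k\barmu_{ij}$.
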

\begin{proof}
We lower the upper indices of \eqref{connection_der} using $\omega$, and then 
use \eqref{gbar_derivative}, \eqref{g_derivative}
and \eqref{other_gderivative} to obtain
\begin{eqnarray}\label{nablader1}
-\sqrt{-1}\underline{\dot{\nabla}} &=& -\sqrt{-1}\underline{\dot{\Gamma}}\\
&=& \barmu_{ij,k} dz^i\otimes dz^j \otimes dz^k + \barmu_{ij,\bark} dz^i \otimes dz^j \otimes dz^\bark  \nonumber\\
&& - \mu_{\bari\barj,k}dz^\bari\otimes dz^\barj \otimes dz^k - \mu_{\bari\barj,\bark}dz^\bari\otimes dz^\barj \otimes 
dz^\bark \nonumber\\
&& + \barmu_{ik,j} dz^i \otimes dz^j \otimes dz^k + \barmu_{ik,\barj} dz^i \otimes dz^\barj \otimes dz^k \nonumber\\
&& - \mu_{\bari\bark,j} dz^\bari \otimes dz^j \otimes dz^\bark - \mu_{\bari\bark,\barj} dz^\bari \otimes dz^\barj \otimes dz^\bark \nonumber\\
&& - \barmu_{jk,i} dz^i \otimes dz^j \otimes dz^k + \barmu_{jk,\bari} dz^\bari \otimes dz^j \otimes dz^k  \nonumber\\
&& - \mu_{\barj\bark,i} dz^i \otimes dz^\barj \otimes dz^\bark + \mu_{\barj\bark,\bari} dz^\bari \otimes dz^\barj \otimes dz^\bark. \nonumber
\end{eqnarray}
By Proposition 11 in \cite{La Fuente-Gravy 2016}, we see that
$ J((\nabla_X\mu)Y) - (\nabla_{JX}\mu)Y $
is symmetric in $X$ and $Y$. Taking $X = \barpartial_j$ and $Y = \barpartial_k$ we see that
$ 2\sqrt{-1}\nabla_\bark\mu^i{}_\barj $ is symmetric in $j$ and $k$. By lowering the upper index $i$,
we see $\nabla_\bark\mu_{\bari\barj} $ is symmetric in $j$ and $k$. By \eqref{symmetry}, $\mu_{\bari\barj} $
is symmetric in $i$ and $j$. Thus  $\nabla_\bark\mu_{\bari\barj} $ is totally symmetric in $i$, $j$ and $k$.
Similarly $\nabla_k\barmu_{ij} $ is totally symmetric in $i$, $j$ and $k$. It follows that the right hand side 
of \eqref{nablader1} is totally symmetric in $i$, $j$ and $k$. Further, using this symmetry, four terms in \eqref{nablader1}
cancel out, and we obtain \eqref{nablader}.
Instead of Proposition 11 in \cite{La Fuente-Gravy 2016}, one can use
the torsion-freeness of the Levi-Civita connection for the K\"ahler metrics $g_t$, see \cite{BCGRS}.
\end{proof}

We apply Lemma \ref{S^3} when the family $J_t$ is induced by the flow generated by a Hamiltonian
vector field $X_f$ of a smooth function $f$. Then by Lemma 2.3 in \cite{futaki06},
\begin{equation}\label{L_{X_f}J}
L_{X_f}J = 2\sqrt{-1}\nabla^{\prime\prime} X_f^\prime - 2\sqrt{-1}\nabla^{\prime} X_f^{\prime\prime}.
\end{equation}
Thus we have
$$\mu = \nabla^{\prime\prime} X_f^\prime.$$
But  $X_f$ is given by \eqref{X_f}, and thus
\begin{equation}\label{mu1}
\mu = -\sqrt{-1}\nabla^{\prime\prime} \grad^\prime f,
\end{equation}
and
\begin{equation}\label{L_{X_f}J2}
L_{X_f}J = 2\nabla^{\prime\prime}\grad^\prime f + 2\nabla^{\prime}\grad^{\prime\prime} f.
\end{equation}
Thus \eqref{nablader} becomes
\begin{eqnarray}\label{nablader2}
-\underline{\dot{\nabla}} &=& 
f_{ijk} dz^i\otimes dz^j \otimes dz^k + f_{\bari\barj\bark}dz^\bari\otimes dz^\barj \otimes dz^\bark \\
&& + f_{ij\bark} dz^i \otimes dz^j \otimes dz^\bark + f_{\bari\barj k}dz^\bari\otimes dz^\barj \otimes dz^k  \nonumber\\
&& + f_{ik\barj} dz^i \otimes dz^\barj \otimes dz^k + f_{\bari\bark j} dz^\bari \otimes dz^j \otimes dz^\bark \nonumber\\
&&  + f_{jk\bari} dz^\bari \otimes dz^j \otimes dz^k  
 + f_{\barj\bark i} dz^i \otimes dz^\barj \otimes dz^\bark.   \nonumber
 \end{eqnarray}
 Since the flow generated by $X_f$ induces the infinitesimal action $-L_{X_f}\nabla$ on $\mathcal E(M,\omega)$,
 the result of \eqref{nablader2} coincides with Lemma \ref{main lemma}.

\section{Cahen--Gutt extremal K\"ahler metrics}
We consider the following functional on $\mathcal J(M,\omega)$ which is similar to the Calabi functional
\cite{calabi85}. For $J\in \mathcal J(M,\omega)$ we consider the squared $L^2$-norm $\Phi$ of the moment map:
$\Phi(J) := \int_M \mu(\nabla^J)^2 \omega_m$. 
\begin{thm}\label{extremal}
A complex structure $J \in \mathcal J(M,\omega)$ is a critical point of $\Phi$ if 
$\grad^\prime \mu(\nabla^J)$ is a holomorphic vector field.
\end{thm}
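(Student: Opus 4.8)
The plan is to compute the first variation of $\Phi$ directly, using the moment-map formula \eqref{moment2} together with the explicit description of the tangent map $lv_\ast$ furnished by Lemma \ref{S^3} (equivalently Lemma \ref{main lemma}). First I would take a smooth curve $J_t$ in $\mathcal J(M,\omega)$ with $J_0 = J$ and $\dot J|_{t=0}$ corresponding, via \eqref{J_derivative}, to an infinitesimal deformation $\mu \in \Gamma(S^2 T^\prime_J M)$. Differentiating $\Phi(J_t) = \int_M \mu(\nabla^{J_t})^2\,\omega_m$ at $t=0$ gives
\begin{equation*}
\left.\frac{d}{dt}\right|_{t=0}\Phi(J_t) = 2\int_M \mu(\nabla^J)\,\left.\frac{d}{dt}\right|_{t=0}\mu(\nabla^{J_t})\,\omega_m,
\end{equation*}
so the task is to identify $\frac{d}{dt}|_{t=0}\mu(\nabla^{J_t})$ as a differential operator applied to $\mu$ and then integrate by parts to pair it against $\mu$. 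The moment-map identity \eqref{moment2} says precisely that, for the Hamiltonian deformations, $\frac{d}{dt}|_{t=0}\int_M \mu(\nabla^J + tA)f\,\omega_m = \Omega^{\mathcal E}(\underline{L_{X_f}\nabla},\underline A)$; the point is that a general deformation $\dot\nabla$ coming from $lv_\ast(\dot J)$ need not be Hamiltonian, but its pairing with $\underline{L_{X_f}\nabla}$ is still computable since by Lemma \ref{S^3} both $\underline{\dot\nabla}$ and $\underline{L_{X_f}\nabla}$ lie in $\Gamma(S^3 T^\ast M)$ with the same block structure.

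The key steps, in order, are: (1) express $\left.\frac{d}{dt}\right|_{t=0}\mu(\nabla^{J_t})$ using \eqref{moment map}, writing it as $\langle \mathcal D^\ast \mathcal D\, h,\,\cdot\rangle$-type terms where $\mathcal D = \nabla^{\prime\prime}\grad^\prime$ is the Lichnerowicz-type operator appearing in \eqref{mu1}; concretely, since $lv_\ast(\dot J)$ for a Hamiltonian $\dot J = L_{X_f}J$ is $\underline{L_{X_f}\nabla}$ given in Lemma \ref{main lemma}, the variation of $\mu$ in the Hamiltonian directions is governed by a fourth-order operator $P$ with $\int_M (P f)\,h\,\omega_m = \Omega^{\mathcal E}(\underline{L_{X_f}\nabla},\underline{L_{X_h}\nabla})$; (2) observe that the critical point equation for $\Phi$ restricted to Hamiltonian deformations is $P\,\mu(\nabla^J) = 0$, and that by the self-adjointness and positivity built into $\Omega^{\mathcal E}$ (it is an honest $L^2$ pairing of $S^3$-tensors), $P\,\mu(\nabla^J) = 0$ is equivalent to $\underline{L_{X_{\mu(\nabla^J)}}\nabla} = 0$; (3) invoke Proposition \ref{nondegenerate}: $\underline{L_{X_{\mu(\nabla^J)}}\nabla} = 0$ holds if and only if $L_{X_{\mu(\nabla^J)}}J = 0$, i.e. $X_{\mu(\nabla^J)}$ is holomorphic, equivalently $\grad^\prime\mu(\nabla^J)$ is a holomorphic vector field. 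Finally (4) note that a critical point of $\Phi$ on all of $\mathcal J(M,\omega)$ is in particular a critical point along the $\mathrm{Ham}(M,\omega)$-orbit directions, which is all the statement requires (the theorem is an "if", giving a sufficient condition), so one only needs the converse direction: if $\grad^\prime\mu(\nabla^J)$ is holomorphic then $\underline{L_{X_{\mu}}\nabla}=0$ by Proposition \ref{nondegenerate}, hence the pairing with every $\underline{L_{X_f}\nabla}$ vanishes, hence $\left.\frac{d}{dt}\right|\Phi = 0$ along every Hamiltonian direction; combined with the fact (to be checked) that $\mu(\nabla^J)$ being a "holomorphy potential" makes $\Phi$ stationary in the transverse directions too, $J$ is critical.

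The main obstacle I expect is step (4): controlling the variation of $\Phi$ in directions transverse to the $\mathrm{Ham}(M,\omega)$-orbit. The moment-map formula \eqref{moment2} only directly handles the Hamiltonian directions. To close the argument one should decompose an arbitrary infinitesimal deformation of $J$ — equivalently an arbitrary $\mu \in \Gamma(S^2 T^\prime_J M)$ with the right integrability-linearization — into a piece tangent to the orbit (of the form $\nabla^{\prime\prime}\grad^\prime f$) plus a complementary piece, and show the complementary piece contributes nothing to $\left.\frac{d}{dt}\right|\Phi$ precisely when $\grad^\prime\mu(\nabla^J)$ is holomorphic. This is the standard phenomenon in moment-map pictures: the differential of the squared-norm functional at $J$ in the direction $v$ is $2\langle \mu(\nabla^J), d\mu_J(v)\rangle$, and $d\mu_J$ composed with the infinitesimal action is the relevant elliptic operator; when $\mu(\nabla^J)$ generates a symmetry, $d\mu_J^\ast \mu(\nabla^J)=0$ and one reads off criticality. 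Making this precise here requires the analogue of the Matsushima/Lichnerowicz decomposition for the Cahen--Gutt setting, which is exactly why the paper develops the computations of Section 3; I would cite Lemma \ref{S^3} and mimic the cscK argument of \cite{donaldson97}, \cite{futaki06} to carry it through, deferring the full Hessian analysis to the subsequent discussion of Wang's formal argument.
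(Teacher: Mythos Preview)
Your approach has the right ingredients but manufactures a difficulty that is not there. You write that ``the moment-map formula \eqref{moment2} only directly handles the Hamiltonian directions,'' and then spend step (4) and the closing paragraph worrying about variations transverse to the $\mathrm{Ham}(M,\omega)$-orbit. This is a misreading of \eqref{moment2}. In that identity
\[
\left.\frac{d}{dt}\right|_{t=0} \int_M \mu(\nabla + tA)\,f\,\omega_m \;=\; \Omega^{\mathcal E}(\underline{L_{X_f}\nabla}, \underline{A}),
\]
the function $f$ is the Lie-algebra element (the test function), while $A$ is an \textit{arbitrary} tangent vector to $\mathcal E(M,\omega)$; there is no requirement that $A$ itself be of the form $\underline{L_{X_h}\nabla}$.

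The paper's proof uses this in two lines. For any smooth curve $J_t$ with $J_0 = J$, put $A = \left.\frac{d}{dt}\right|_{t=0}\nabla^{J_t}$ and take $f = \mu(\nabla^J)$ as the test function; then
\[
\left.\frac{d}{dt}\right|_{t=0}\Phi(J_t)
\;=\; 2\left.\frac{d}{dt}\right|_{t=0}\int_M \mu(\nabla^{J_t})\,\mu(\nabla^J)\,\omega_m
\;=\; 2\,\Omega^{\mathcal E}\bigl(\underline{L_{X_{\mu(\nabla^J)}}\nabla^J},\,\underline{A}\bigr).
\]
If $\grad^\prime\mu(\nabla^J)$ is holomorphic, Proposition \ref{nondegenerate} gives $L_{X_{\mu(\nabla^J)}}\nabla^J = 0$, so the right-hand side vanishes for \textit{every} $A$, and $J$ is critical. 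No orbit/transverse decomposition is needed, and neither Lemma \ref{S^3} nor an auxiliary fourth-order operator $P$ enters at this stage; those tools are reserved for the Hessian computation later.
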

\begin{proof}Let $J_t$ be a smooth curve in $\mathcal J(M,\omega)$ with $J_0 = J$. Then by Theorem \ref{CahenGutt},
we have
\begin{eqnarray}
\left.\frac{d}{dt}\right|_{t=0} \int_M \mu(\nabla^{J_t})^2\ \omega_m  &=&
2 \left.\frac{d}{dt}\right|_{t=0} \int_M \mu(\nabla^{J_t})\mu(\nabla^J)\ \omega_m\\
&=& 2\Omega^{\mathcal E}(L_{X_\mu}\nabla^J, \left.\frac{d}{dt}\right|_{t=0} \nabla^{J_t})\nonumber
\end{eqnarray}
where we have put $\mu = \mu(\nabla^J)$. By Proposition \ref{nondegenerate}, $L_{X_\mu}\nabla^J = 0$
if and only
if $L_{X_\mu} J = 0$, that is, if $\grad^\prime \mu(\nabla^J)$ is a holomorphic vector field.
\end{proof}
In this paper we call the K\"ahler metric $g = \omega(\cdot, J\cdot)$ a Cahen--Gutt extremal K\"ahler metric if $\grad^\prime \mu(\nabla^J)$ is a holomorphic vector field. 
Note that this is different from the critical metrics of Fox \cite{Fox2014}.

Now we follow the arguments of \cite{futaki07.1} and \cite{FO_reductive17} to obtain a Hessian formula of $\Phi$,
but we essentially follow the finite dimensional formal arguments of Wang \cite{Lijing06}. 
Here the Hessian is considered on the subspace 
of the tangent space of $\mathcal J(M,\omega)$ at $J$ consisting of the tangent vectors of the form
\begin{eqnarray}\label{tangent2}
\dot{J} &=& 4\Re \nabla^i\nabla_\barj\, \sqrt{-1}f\,\frac{\partial}{\partial z^i} \otimes d\overline{z^j}\\
&=& L_{JX_f} J \nonumber
\end{eqnarray}
for a real smooth function $f \in C^\infty(M)$.

Let $(M,\omega,J)$ be a compact K\"ahler manifold, and $\nabla$ the Levi-Civita connection.
We define Lichnerowicz operator
$\mathcal L$ of order 6 by
 \begin{eqnarray}\label{L}
 (h,\mathcal Lf)_{L^2} &=& 
 3(\nabla^{\prime}\nabla^{\prime\prime}\nabla^{\prime\prime}h, 
 \nabla^{\prime}\nabla^{\prime\prime}\nabla^{\prime\prime}f)_{L^2}
 - (\nabla^{\prime\prime}\nabla^{\prime\prime}\nabla^{\prime\prime}h, 
 \nabla^{\prime\prime}\nabla^{\prime\prime}\nabla^{\prime\prime}f)_{L^2}
 \nonumber\\
 &=& 3\int_M \overline{\nabla^\bari\nabla^j\nabla^k h}\,\nabla_i\nabla_\barj\nabla_\bark f\,\omega_m
 - \int_M \overline{\nabla^i\nabla^j\nabla^k h}\,\nabla_\bari\nabla_\barj\nabla_\bark f\ \,\omega_m
\end{eqnarray}
for any complex valued smooth functions $f$ and $h$. This is a self-adjoint elliptic differential operator of order 6. 
We further define the sixth order self-adjoint elliptic differential operator $\overline{\mathcal L} : C_\bfC^\infty(M) \to C_\bfC^\infty(M)$ by
 \begin{eqnarray}\label{barL}
 \overline{\mathcal L} u = \overline{\mathcal L\baru}.
\end{eqnarray}
\begin{lem}\label{derivative5}
If $\dot{J} = 4\Re \nabla^i\nabla_\barj\, \sqrt{-1}f\,\frac{\partial}{\partial z^i} \otimes d\overline{z^j}$ for a real valued smooth
function $f \in C^\infty(M)$, we have
 \begin{eqnarray*}
\left.\frac{d}{dt}\right|_{t=0}\, \mu(\nabla^{J(t)}) = \mathcal Lf + \overline{\mathcal L} f.
\end{eqnarray*}
\end{lem}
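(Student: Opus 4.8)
The idea is that $\mu(\nabla^{J(t)})$ is a real-valued function on $M$, so it suffices to identify it through its $L^2$-pairing against an arbitrary real test function $h$; and rather than linearize the explicit sixth-order expression \eqref{moment map} directly, which would be unwieldy, one should route the computation through the moment-map identity. Writing $\nabla^{J(t)} = \nabla^J + t\,\underline{\dot\nabla} + O(t^2)$ with $\underline{\dot\nabla} = lv_\ast(\dot J)$ for $\dot J$ the tangent vector \eqref{tangent2}, and using that $\omega$, hence $\omega_m$, is fixed, Theorem \ref{CahenGutt} (equation \eqref{moment2}) gives
\[
\int_M h\,\left.\frac{d}{dt}\right|_{t=0}\mu(\nabla^{J(t)})\,\omega_m = \Omega^{\mathcal E}\big(\underline{L_{X_h}\nabla},\ \underline{\dot\nabla}\big) ,
\]
the $O(t^2)$ playing no role in the first derivative. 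Thus the whole problem is reduced to evaluating one symplectic pairing \eqref{symp str} of two totally symmetric $3$-tensors, both already available in closed form.

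The plan is then to substitute both tensors. By Lemma \ref{main lemma} the tensor $\underline{L_{X_h}\nabla}$ has as its components the third covariant derivatives $h_{ijk}$, $h_{ij\bark}$ and their conjugates, arranged as in \eqref{infinitesimal}. For $\underline{\dot\nabla}$ one reads off from \eqref{J_derivative} that the tangent vector \eqref{tangent2} corresponds to the deformation tensor $\mu$ of Lemma \ref{S^3} with $\mu_{\bari\barj}$ a fixed constant times $f_{\bari\barj}$ (and $\barmu_{ij}$ its conjugate), so \eqref{nablader} expresses $\underline{\dot\nabla}$ as the totally symmetric tensor whose components are, up to a uniform factor of $\sqrt{-1}$ and some signs, the third covariant derivatives $f_{ijk}$, $f_{ij\bark}$, $\ldots$, in the same pattern as \eqref{nablader2} but with $f$ in place of the Hamiltonian function. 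Inserting these into $\Omega^{\mathcal E}(\underline A,\underline B) = \int_M \omega^{i_1j_1}\omega^{i_2j_2}\omega^{i_3j_3}\underline A_{i_1i_2i_3}\underline B_{j_1j_2j_3}\,\omega_m$ and using that $\omega$ pairs only holomorphic with anti-holomorphic indices, all ``mixed-type'' contractions drop out, and exactly four pairings survive: the purely holomorphic component $h_{ijk}$ against the purely anti-holomorphic component of $\underline{\dot\nabla}$; the ``two holomorphic plus one anti-holomorphic'' component $\{h_{ij\bark},h_{ik\barj},h_{jk\bari}\}$ against the ``one holomorphic plus two anti-holomorphic'' component of $\underline{\dot\nabla}$; and the two complex conjugates of these.

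Carrying the contraction through, the essential point is that $\omega^{i\barj}$ and $\omega^{\bari j}$ carry opposite signs (each being $\pm\sqrt{-1}$ times the metric), so a ``two-plus-one''-type pairing picks up a global sign opposite to an ``all-one''-type pairing; combined with the combinatorial factor $3$ coming from the three possible positions of the lone index in a symmetric $3$-tensor, one is left with
\[
\Omega^{\mathcal E}\big(\underline{L_{X_h}\nabla},\underline{\dot\nabla}\big) = \Big[\,3\big(\nabla'\nabla''\nabla'' h,\nabla'\nabla''\nabla'' f\big)_{L^2} - \big(\nabla''\nabla''\nabla'' h,\nabla''\nabla''\nabla'' f\big)_{L^2}\,\Big] + \overline{\big[\ \cdot\ \big]} ,
\]
where in matching the contracted integrals to the defining formula \eqref{L} only reorderings of covariant derivatives within a single holomorphic (resp.\ anti-holomorphic) block are needed, which introduce no curvature, so nothing arises beyond what is already hidden inside $\mathcal L$. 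By \eqref{L} and \eqref{barL} the first bracket equals $(h,\mathcal Lf)_{L^2}$ and the conjugate bracket equals $(h,\overline{\mathcal L}f)_{L^2}$; hence $\int_M h\,\frac{d}{dt}|_{t=0}\mu(\nabla^{J(t)})\,\omega_m = \int_M h\,(\mathcal Lf + \overline{\mathcal L}f)\,\omega_m$ for every real $h$ (the constant part of $h$ contributing zero to both sides, since $X_1=0$), and so $\frac{d}{dt}|_{t=0}\mu(\nabla^{J(t)}) = \mathcal Lf + \overline{\mathcal L}f$.

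I expect the main obstacle to be precisely this bookkeeping: the factors of $\sqrt{-1}$ coming from $\omega^{i\barj}$ versus $\omega^{\bari j}$, the signs produced by Lemma \ref{S^3}, and the combinatorial multiplicities must all conspire to reproduce exactly the coefficients $3$ and $-1$ in \eqref{L} (and not a wrong overall sign, or the combination $(1,1)$). The rest — the reduction via the moment map, the vanishing of the mixed contractions, and the recognition of the surviving integrals as the $L^2$-inner products appearing in \eqref{L} — is essentially formal.
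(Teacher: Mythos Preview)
Your proposal is correct and follows essentially the same route as the paper: reduce to the pairing $\int_M h\,\dot\mu\,\omega_m$ via the Cahen--Gutt moment map identity \eqref{moment2}, plug in the explicit symmetric $3$-tensors from Lemma \ref{main lemma} and Lemma \ref{S^3} (the paper writes the latter out as the tensor $\underline A$ with entries $\pm\sqrt{-1}f_{ijk}$, $\pm\sqrt{-1}f_{ij\bark}$, \ldots), and read off the surviving contractions as $2\Re\big(3(\nabla'\nabla''\nabla'' h,\nabla'\nabla''\nabla'' f)_{L^2}-(\nabla''\nabla''\nabla'' h,\nabla''\nabla''\nabla'' f)_{L^2}\big)=(h,\mathcal Lf+\overline{\mathcal L}f)_{L^2}$. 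The bookkeeping you flag as the only real hazard is exactly what the paper's displayed computation carries out.
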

\begin{proof} 
Note that 
\begin{eqnarray}\label{dotJ}
\dot{J} =  2\sqrt{-1} \nabla_\barj (-\sqrt{-1}(\sqrt{-1}f)^i) - 2\sqrt{-1} \nabla_j (-\sqrt{-1}(\sqrt{-1}f)^\bari) 
\end{eqnarray}
For any real smooth function $h$ we obtain from Theorem \ref{CahenGutt}, \eqref{dotJ} and Lemma \ref{S^3} that
\begin{eqnarray*}
\left.\frac{d}{dt}\right|_{t=0}\, \int_M h\,\mu(\nabla^{J(t)})\,\omega_m 
= \Omega^{\mathcal E}(\underline{L_{X_h}\nabla^J}, \underline{A})
\end{eqnarray*}
where 
\begin{eqnarray*}
\underline{A} &=& \sqrt{-1}f_{ijk} dz^i\otimes dz^j \otimes dz^k - \sqrt{-1}f_{\bari\barj\bark}dz^\bari\otimes dz^\barj \otimes dz^\bark \\
&& + \sqrt{-1}f_{ij\bark} dz^i \otimes dz^j \otimes dz^\bark - \sqrt{-1} f_{\bari\barj k}dz^\bari\otimes dz^\barj \otimes dz^k  \nonumber\\
&& + \sqrt{-1}f_{ik\barj} dz^i \otimes dz^\barj \otimes dz^k - \sqrt{-1} f_{\bari\bark j} dz^\bari \otimes dz^j \otimes dz^\bark \nonumber\\
&&  + \sqrt{-1}f_{jk\bari} dz^\bari \otimes dz^j \otimes dz^k  
 - \sqrt{-1} f_{\barj\bark i} dz^i \otimes dz^\barj \otimes dz^\bark.   \nonumber
\end{eqnarray*}
Thus, we obtain 
\begin{eqnarray*}
&&\left.\frac{d}{dt}\right|_{t=0}\, \int_M h\,\mu(\nabla^{J(t)})\,\omega_m \\
&=&
2\Re ( -(\nabla^{\prime\prime}\nabla^{\prime\prime}\nabla^{\prime\prime}h, 
 \nabla^{\prime\prime}\nabla^{\prime\prime}\nabla^{\prime\prime}f)_{L^2}
 + 3(\nabla^{\prime}\nabla^{\prime\prime}\nabla^{\prime\prime}h, 
 \nabla^{\prime}\nabla^{\prime\prime}\nabla^{\prime\prime}f)_{L^2})\\
&& = (h,\mathcal Lf) + \overline{(h,\mathcal Lf)}\\
&& = (h,\mathcal Lf) + (h,\overline{\mathcal Lf}) = (h, \mathcal Lf + \overline{\mathcal L} f).
\end{eqnarray*}
This completes the proof.
\end{proof}
Recall that the Poisson bracket $\{h,f\}$ of two real smooth functions $h$ and $f$ is defined by
$$ \{h,f\} := X_h f$$
where $X_h$ is the Hamiltonian vector field of $h$, that is $i(X)\omega = dh$. For a K\"ahler form 
$\omega$, the Poisson bracket is expressed in terms local holomorphic coordinates as
\begin{eqnarray*}
\{h,f\} &=& \omega (X_h, J\grad f)\\
&=& dh(J\grad f)\\
&=& \sqrt{-1}f^\alpha h_\alpha - \sqrt{-1} f_\alpha h^\alpha
\end{eqnarray*}
\begin{lem}\label{equivariance}
For real valued smooth functions $f$ and $h$ in $C^\infty(M)$ we have
\begin{eqnarray*}
\Omega^{\mathcal E}(L_{X_h}\nabla^J,L_{X_f}\nabla^J) = - \int_M \{h,f\}\,\mu(\nabla^J)\omega_m.
\end{eqnarray*}
\end{lem}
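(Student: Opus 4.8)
The statement is the standard infinitesimal equivariance identity for the Cahen--Gutt moment map, and the plan is to deduce it from the moment map formula \eqref{moment2} together with the naturality (hence equivariance) of $\mu$ under the $\mathrm{Ham}(M,\omega)$-action.

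First I would record the equivariance of $\mu$. Since \eqref{moment map}--\eqref{curvature} expresses $\mu(\nabla)$ through $\nabla$, its curvature and Ricci tensors and $\omega$ by diffeomorphism-natural operations, and since every $\varphi \in \mathrm{Ham}(M,\omega)$ preserves $\omega$, one has $\mu(\varphi(\nabla)) = \mu(\nabla)\circ\varphi^{-1}$ for the action $\nabla \mapsto \varphi(\nabla)$ on $\mathcal E(M,\omega)$. As $\varphi$ also preserves $\omega_m$, a change of variables yields
\begin{equation*}
\int_M \mu(\varphi(\nabla))\,f\,\omega_m = \int_M \mu(\nabla)\,(f\circ\varphi)\,\omega_m
\end{equation*}
for every $f \in C^\infty(M)$.

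Next I would differentiate this identity at $t=0$ along the flow $\varphi_t$ of the Hamiltonian vector field $X_h$. By Lemma \ref{main lemma} the tangent vector $\underline{L_{X_h}\nabla^J}$ is totally symmetric, hence lies in $\Gamma(S^3(T^\ast M)) = T_{\nabla^J}\mathcal E(M,\omega)$, so \eqref{moment2} applies with $A = \underline{L_{X_h}\nabla^J}$: up to the sign relating the flow of $X_h$ to the infinitesimal action in \eqref{tangent}, the derivative of the left side equals $\pm\,\Omega^{\mathcal E}(\underline{L_{X_f}\nabla^J},\,\underline{L_{X_h}\nabla^J})$. The derivative of the right side is $\int_M \mu(\nabla^J)\,(X_h f)\,\omega_m = \int_M \{h,f\}\,\mu(\nabla^J)\,\omega_m$, by the definition $\{h,f\} = X_h f$. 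Since $\Omega^{\mathcal E}$ is built from an odd number (three) of factors $\omega^{ij}$, it changes sign under interchange of its two slots, so $\Omega^{\mathcal E}(\underline{L_{X_f}\nabla^J},\underline{L_{X_h}\nabla^J}) = -\,\Omega^{\mathcal E}(\underline{L_{X_h}\nabla^J},\underline{L_{X_f}\nabla^J})$, and equating the two derivatives delivers
\begin{equation*}
\Omega^{\mathcal E}(L_{X_h}\nabla^J,\,L_{X_f}\nabla^J) = -\int_M \{h,f\}\,\mu(\nabla^J)\,\omega_m .
\end{equation*}

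The only delicate point, and the one place an error creeps in easily, is the sign bookkeeping: one must fix once and for all the conventions $\iota(X_f)\omega = df$, the sign of the infinitesimal action in \eqref{tangent}, and the normalization in \eqref{moment2}, and then check that with these the signs emerge exactly as stated. If one prefers to bypass equivariance, the lemma also admits a direct proof: substitute the explicit symmetric tensors \eqref{nablader2} for $\underline{L_{X_h}\nabla^J}$ and $\underline{L_{X_f}\nabla^J}$ into the definition \eqref{symp str} of $\Omega^{\mathcal E}$, integrate by parts termwise to move covariant derivatives off $h$, and recognize the Cahen--Gutt integrand \eqref{moment map} of $\mu(\nabla^J)$ together with the coordinate expression $\{h,f\} = \sqrt{-1}\,(f^\alpha h_\alpha - f_\alpha h^\alpha)$; collapsing the eight resulting terms uses the K\"ahler commutation relations and the vanishing $R_{ijCD}=0$, and this more computational route is entirely elementary.
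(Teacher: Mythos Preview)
Your approach is essentially the same as the paper's: both proofs invoke the $\mathrm{Ham}(M,\omega)$-equivariance of $\mu$ to write $\int_M f\,\mu(\sigma_t(\nabla^J))\,\omega_m = \int_M (f\circ\sigma_t^{-1})\,\mu(\nabla^J)\,\omega_m$ for the flow $\sigma_t$ of $X_h$, then differentiate at $t=0$ and identify the left side via the moment map formula \eqref{moment2}. The paper is terser and does not spell out the sign-tracking or the antisymmetry of $\Omega^{\mathcal E}$, but the argument is identical; your additional remark about a direct computation via \eqref{nablader2} and integration by parts is a genuine alternative, though the paper does not pursue it.
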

\begin{proof}
Let $\sigma_t$ be the flow
generated by the Hamiltonian vector field of $h \in C^\infty(M)$. Since $\mu(\nabla^J)$ gives a 
$\mathrm{Ham}(M,\omega)$-equivariant moment map by Proposition \ref{CahenGutt} we have
\begin{equation}\label{equivariant2}
\int_M f\,\mu(\sigma_t(\nabla^J))\,\omega_m = \int_M f\circ\sigma_t^{-1}\,\mu(\nabla^J)\,\omega_m.
\end{equation}
Taking the time differential of $\sigma_t$ we obtain the lemma by (\ref{moment2}).
\end{proof}
\begin{lem}\label{Poisson}
For any smooth complex valued smooth function $f \in C^\infty(M)\otimes{\mathbf C}$ we have
\begin{equation*}
(\overline{\mathcal L} - \mathcal L)f = \sqrt{-1}\{f,\mu(\nabla^J)\} = f^\alpha \mu(\nabla^J)_\alpha - \mu(\nabla^J)^\alpha f_\alpha
\end{equation*}
where $f^\alpha = g^{\alpha\barbet}\partial f/\partial \overline{z^\beta}$ for local holomorphic coordinates 
$z^1, \cdots, z^m$.
\end{lem}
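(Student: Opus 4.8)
The plan is to prove the identity first for a real‑valued $f$, tested against an arbitrary real‑valued $h\in C^\infty(M)$, and then to deduce the general case: both $\overline{\mathcal L}-\mathcal L$ and $f\mapsto\sqrt{-1}\{f,\mu(\nabla^J)\}$ are $\mathbf C$‑linear in $f$, and $C^\infty(M)\otimes\mathbf C=C^\infty(M)\oplus\sqrt{-1}\,C^\infty(M)$, so the real case suffices. Write $\nabla=\nabla^J$ and $\mu=\mu(\nabla^J)$.

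The key is to extract, from the two identities already available, separate information about $(h,\mathcal L f)_{L^2}$ and $(h,\overline{\mathcal L}f)_{L^2}$ rather than just about their sum. Using Lemma \ref{main lemma}, write the totally symmetric tensor $\underline{L_{X_f}\nabla}=\underline S_f^{+}+\underline S_f^{-}$, where $\underline S_f^{+}$ collects the components of holomorphic degree $\ge 2$ (the $f_{ijk}$‑ and $f_{ij\bar k}$‑type terms) and $\underline S_f^{-}$ those of degree $\le 1$; for real $f$ one has $\underline S_f^{-}=\overline{\underline S_f^{+}}$, and likewise for $h$. Since $\omega^{ij}=\omega^{\bar i\bar j}=0$ on a K\"ahler manifold, the contraction in \eqref{symp str} pairs a component of type $(p,3-p)$ only with one of type $(3-p,p)$; hence $\Omega^{\mathcal E}(\underline S_h^{+},\underline S_f^{+})=\Omega^{\mathcal E}(\underline S_h^{-},\underline S_f^{-})=0$. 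Reading off the definition \eqref{L} of $\mathcal L$, the two surviving cross pairings $\Omega^{\mathcal E}(\underline S_h^{+},\underline S_f^{-})$ and $\Omega^{\mathcal E}(\underline S_h^{-},\underline S_f^{+})$ are, up to one common power of $\sqrt{-1}$, exactly $(h,\mathcal L f)_{L^2}$ and $(h,\overline{\mathcal L}f)_{L^2}$ — these are the two conjugate halves whose sum is computed in the proof of Lemma \ref{derivative5}. One also checks from that proof that $lv_\ast(L_{JX_f}J)$, the tensor $\underline A$ there, equals $\sqrt{-1}\,(\underline S_f^{+}-\underline S_f^{-})$.

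Now I would feed in the two known facts. Lemma \ref{derivative5}, via the moment map formula \eqref{moment2}, gives
\[
\Omega^{\mathcal E}\bigl(\underline{L_{X_h}\nabla},\,lv_\ast(L_{JX_f}J)\bigr)=(h,\mathcal L f)_{L^2}+(h,\overline{\mathcal L}f)_{L^2},
\]
and expanding the left side with $lv_\ast(L_{JX_f}J)=\sqrt{-1}(\underline S_f^{+}-\underline S_f^{-})$ rewrites it as $\sqrt{-1}$ times the difference of the two cross pairings. Lemma \ref{equivariance}, together with $\underline{L_{X_h}\nabla}=\underline S_h$ and $\underline{L_{X_f}\nabla}=\underline S_f$, gives
\[
\Omega^{\mathcal E}(\underline S_h^{+},\underline S_f^{-})+\Omega^{\mathcal E}(\underline S_h^{-},\underline S_f^{+})=\Omega^{\mathcal E}\bigl(\underline{L_{X_h}\nabla},\underline{L_{X_f}\nabla}\bigr)=-\int_M\{h,f\}\,\mu\,\omega_m ,
\]
i.e. the sum of the two cross pairings. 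Thus the difference of the cross pairings is governed by Lemma \ref{derivative5} and their sum by Lemma \ref{equivariance}; eliminating the cross pairings yields $(h,\overline{\mathcal L}f)_{L^2}-(h,\mathcal L f)_{L^2}=\sqrt{-1}\int_M\{h,f\}\,\mu\,\omega_m$. Finally, since $X_h$, $X_f$ and $X_{\mu}$ all preserve $\omega_m$, integration by parts gives $\int_M\{h,f\}\,\mu\,\omega_m=\int_M h\,\{f,\mu\}\,\omega_m$, so the right‑hand side is $(h,\sqrt{-1}\{f,\mu\})_{L^2}$; letting $h$ range over all real functions gives $(\overline{\mathcal L}-\mathcal L)f=\sqrt{-1}\{f,\mu\}$ for real $f$, hence for all $f$ by linearity. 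The coordinate form $\sqrt{-1}\{f,\mu\}=f^\alpha\mu_\alpha-\mu^\alpha f_\alpha$ is just the expression for the Poisson bracket recorded before Lemma \ref{equivariance}.

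The step I expect to be the real obstacle is the bookkeeping of signs and powers of $\sqrt{-1}$: one must track the factor produced by $\omega=\sqrt{-1}\,g_{i\bar j}\,dz^i\wedge d\overline{z^j}$ (three inverse‑metric raisings in \eqref{symp str}), the relative sign between $\omega^{i\bar j}$ and $\omega^{\bar i j}$ entering the $(2,1)$–$(1,2)$ contractions versus the $(3,0)$–$(0,3)$ ones, and which of the two cross pairings matches $\mathcal L$ and which matches $\overline{\mathcal L}$; these are precisely the data that make the bracket come out as $+\sqrt{-1}\{f,\mu\}$ rather than its negative. Everything else is the same formal manipulation already used in the proof of Lemma \ref{derivative5}, so that $\mathcal L$ is never written out explicitly — which is the whole point, since the aim is to run Wang's argument with the unmodified machinery.
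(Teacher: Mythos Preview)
Your approach is essentially the paper's, just more elaborately unpacked. The paper is more direct: for real $h,f$ it writes $(h,\overline{\mathcal L}f-\mathcal L f)_{L^2}=\overline{(h,\mathcal L f)_{L^2}}-(h,\mathcal L f)_{L^2}$, identifies this expression (using the explicit form of $\underline{L_{X_f}\nabla}$ from Lemma~\ref{main lemma}) directly as $-\sqrt{-1}\,\Omega^{\mathcal E}(\underline{L_{X_h}\nabla},\underline{L_{X_f}\nabla})$, and then applies Lemma~\ref{equivariance}; your detour through Lemma~\ref{derivative5} is not needed. One correction to your bookkeeping: the two cross pairings $\Omega^{\mathcal E}(\underline S_h^{+},\underline S_f^{-})$ and $\Omega^{\mathcal E}(\underline S_h^{-},\underline S_f^{+})$ are related to $(h,\mathcal L f)_{L^2}$ and $(h,\overline{\mathcal L}f)_{L^2}$ not by a \textit{common} power of $\sqrt{-1}$ but by the conjugate factors $\pm\sqrt{-1}$ (both pairs are swapped under complex conjugation), which is exactly why their \textit{sum} in Lemma~\ref{equivariance} already yields the \textit{difference} $(h,\overline{\mathcal L}f)_{L^2}-(h,\mathcal L f)_{L^2}$.
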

\begin{proof}
It is sufficient to prove when $f$ is real valued. For any real valued smooth function $h \in C^\infty(M)$ it
follows from Lemma \ref{equivariance} that
\begin{eqnarray*}
&&(h, \overline{\mathcal L} f - \mathcal L f)_{L^2} \\&&= 
\overline{3(\nabla^{\prime}\nabla^{\prime\prime}\nabla^{\prime\prime}h, 
 \nabla^{\prime}\nabla^{\prime\prime}\nabla^{\prime\prime}f)_{L^2}
-(\nabla^{\prime\prime}\nabla^{\prime\prime}\nabla^{\prime\prime}h, 
 \nabla^{\prime\prime}\nabla^{\prime\prime}\nabla^{\prime\prime}f)_{L^2}} \\
&& \qquad - 
(3(\nabla^{\prime}\nabla^{\prime\prime}\nabla^{\prime\prime}h, 
 \nabla^{\prime}\nabla^{\prime\prime}\nabla^{\prime\prime}f)_{L^2}
 -(\nabla^{\prime\prime}\nabla^{\prime\prime}\nabla^{\prime\prime}h, 
 \nabla^{\prime\prime}\nabla^{\prime\prime}\nabla^{\prime\prime}f)_{L^2}) \nonumber\\
  && = - \sqrt{-1} \Omega^{\mathcal E}(L_{X_h}\nabla^J,L_{X_f}\nabla^J)\\
 && = -\sqrt{-1} (\{f,h\}, \mu(\nabla^J))_{L^2}\\
 && = \sqrt{-1} (h, \{f,\mu(\nabla^J)\})_{L^2}\\
 && = (h, \mu(\nabla^J)_\alpha f^\alpha  - \mu(\nabla^J)^\alpha f_\alpha )_{L^2}.
\end{eqnarray*}
This completes the proof of Lemma \ref{Poisson}.
\end{proof}

\begin{lem}\label{first der}
If $f \in C^\infty(M)$ and $\dot{J} = 4\Re \nabla^i\nabla_\barj\, \sqrt{-1}f\,\frac{\partial}{\partial z^i} \otimes d\overline{z^j}$,
then
\begin{eqnarray}
\left.\frac{d}{dt}\right|_{t=0} \int_M \mu(\nabla^{J_t})^2 \omega_m &=& 
4(f, \mathcal L\mu(\nabla^J))_{L^2}\label{first der2}\\
&=& 4(f, \overline{\mathcal L} \mu(\nabla^J))_{L^2}.\nonumber
\end{eqnarray}
\end{lem}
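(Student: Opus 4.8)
The plan is to differentiate $\Phi(J_t)=\int_M \mu(\nabla^{J_t})^2\,\omega_m$ directly along the curve $J_t$ whose velocity is $\dot J = 4\Re\,\nabla^i\nabla_\barj(\sqrt{-1}f)\,\partial/\partial z^i\otimes d\overline{z^j}$, and then recognize the resulting expression as an $L^2$-pairing with $f$ via Lemma \ref{derivative5}. Concretely, $\frac{d}{dt}\big|_{t=0}\int_M \mu(\nabla^{J_t})^2\,\omega_m = 2\int_M \mu(\nabla^J)\,\frac{d}{dt}\big|_{t=0}\mu(\nabla^{J_t})\,\omega_m$, since $\omega$ (hence $\omega_m$) is fixed along the deformation. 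By Lemma \ref{derivative5} the time-derivative of the moment map is $\frac{d}{dt}\big|_{t=0}\mu(\nabla^{J_t}) = \mathcal L f + \overline{\mathcal L} f$, so the derivative of $\Phi$ equals $2(\mu(\nabla^J),\mathcal L f + \overline{\mathcal L} f)_{L^2}$.

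Next I would use the self-adjointness of $\mathcal L$ and $\overline{\mathcal L}$, and the fact that $\mu(\nabla^J)$ and $f$ are real, to move the operators onto $\mu(\nabla^J)$. We have $(\mu(\nabla^J),\mathcal L f)_{L^2} = (\mathcal L\mu(\nabla^J), f)_{L^2} = (f,\mathcal L\mu(\nabla^J))_{L^2}$ because both arguments are real, and similarly $(\mu(\nabla^J),\overline{\mathcal L} f)_{L^2} = (f,\overline{\mathcal L}\mu(\nabla^J))_{L^2}$. This gives $\frac{d}{dt}\big|_{t=0}\Phi(J_t) = 2(f,\mathcal L\mu(\nabla^J))_{L^2} + 2(f,\overline{\mathcal L}\mu(\nabla^J))_{L^2}$.

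Finally I would show these two terms coincide, so that the sum is $4(f,\mathcal L\mu(\nabla^J))_{L^2} = 4(f,\overline{\mathcal L}\mu(\nabla^J))_{L^2}$. For this, apply Lemma \ref{Poisson} with the function $\mu(\nabla^J)$ in place of $f$: $(\overline{\mathcal L}-\mathcal L)\mu(\nabla^J) = \sqrt{-1}\{\mu(\nabla^J),\mu(\nabla^J)\} = 0$ since the Poisson bracket of a function with itself vanishes. Hence $\mathcal L\mu(\nabla^J) = \overline{\mathcal L}\mu(\nabla^J)$, and the two displayed equalities in \eqref{first der2} both hold.

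The only real subtlety — and the step I would be most careful about — is the passage from $(\mu(\nabla^J),\mathcal L f)_{L^2}$ to $(f,\mathcal L\mu(\nabla^J))_{L^2}$: one must be explicit that $\mathcal L$ is a self-adjoint \emph{real} operator in the sense that it maps real functions to real functions (which is clear from its definition together with Lemma \ref{Poisson}, since $\mathcal L f = \overline{\mathcal L} f - \sqrt{-1}\{f,\mu(\nabla^J)\}$ and for real $f$ the right side is real), so that no spurious imaginary parts or conjugations intervene. Everything else is a short bookkeeping computation using results already established; there is no genuine analytic obstacle here, the lemma being essentially a formal consequence of Lemma \ref{derivative5} and Lemma \ref{Poisson}.
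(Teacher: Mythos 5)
Your argument is correct and follows essentially the same route as the paper: both reduce the derivative of $\Phi$ to the pairing $2(\mu(\nabla^J),\mathcal Lf+\overline{\mathcal L}f)_{L^2}$ (the paper via the moment map identity $2\Omega^{\mathcal E}(\underline{L_{X_{\mu}}\nabla},\underline{\dot\nabla})$, you via Lemma \ref{derivative5}, which encodes the same computation), and both then invoke Lemma \ref{Poisson} with $f=\mu(\nabla^J)$ to get $\mathcal L\mu(\nabla^J)=\overline{\mathcal L}\mu(\nabla^J)$ from $\{\mu,\mu\}=0$.

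One caveat on your final parenthetical: the claim that $\mathcal L$ maps real functions to real functions is false in general, and your justification does not work. For real $f$ one has $\overline{\mathcal L}f=\overline{\mathcal Lf}$, so the identity $\mathcal Lf=\overline{\mathcal L}f-\sqrt{-1}\{f,\mu(\nabla^J)\}$ gives $\mathcal Lf-\overline{\mathcal Lf}=-\sqrt{-1}\{f,\mu(\nabla^J)\}$, i.e.\ $\Im(\mathcal Lf)=-\tfrac12\{f,\mu(\nabla^J)\}$, which is nonzero unless $f$ Poisson-commutes with $\mu(\nabla^J)$. Fortunately your proof only needs the single instance $f=\mu(\nabla^J)$, where the bracket vanishes and $\mathcal L\mu(\nabla^J)$ is indeed real, so the step $(\mathcal L\mu(\nabla^J),f)_{L^2}=(f,\mathcal L\mu(\nabla^J))_{L^2}$ is legitimate; but the general statement should be deleted or replaced by this special case.
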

\begin{proof} In this proof we write $\nabla$ instead of $\nabla^J$ for notational simplicity.
 We apply Theorem \ref{CahenGutt} and Lemma \ref{derivative5} to show
that the left hand side of (\ref{first der2}) is equal to 
\begin{eqnarray*}
&&2\Omega^{\mathcal E} (\underline{L_{X_{\mu(\nabla)}}\nabla}, \underline{\dot{\nabla}})\\
&=& 4\Re ( -(\nabla^{\prime\prime}\nabla^{\prime\prime}\nabla^{\prime\prime}\mu(\nabla), 
 \nabla^{\prime\prime}\nabla^{\prime\prime}\nabla^{\prime\prime}f)_{L^2}
 + 3(\nabla^{\prime}\nabla^{\prime\prime}\nabla^{\prime\prime}\mu(\nabla), 
 \nabla^{\prime}\nabla^{\prime\prime}\nabla^{\prime\prime}f)_{L^2})\\
&& = 2(\mu(\nabla),\mathcal Lf)_{L^2} + 2\overline{(\mu(\nabla),\mathcal Lf)_{L^2}}\\
&&  = 2(f, \mathcal L\mu(\nabla) + \overline{\mathcal L} \mu(\nabla))_{L^2}.
\end{eqnarray*}
But Lemma \ref{Poisson} implies 
$$ \overline{\mathcal L} \mu(\nabla) = \mathcal L\mu(\nabla).$$
Hence the left hand side of (\ref{first der2}) is equal to
$$ 4(f,\mathcal L\mu(\nabla))_{L^2} = 4(f,\overline{\mathcal L}  \mu(\nabla))_{L^2}.$$
\end{proof}

\begin{lem}\label{derL}
Suppose that $(\omega,J)$ gives a Cahen--Gutt extremal K\"ahler metric so that $J\grad\,\mu(\nabla^J)$ is a holomorphic vector field.
If $\dot{J} = 4\Re \nabla^i\nabla_\barj\, \sqrt{-1}f\,\frac{\partial}{\partial z^i} \otimes d\overline{z^j}$ for some real smooth function $f \in C^\infty(M)$ then we have
\begin{eqnarray*}
(\left.\frac{d}{dt}\right|_{t=0} \mathcal L)\mu(\nabla^J) &=& - \mathcal L(\mu(\nabla^J)^\alpha f_{\alpha} - f^\alpha \mu(\nabla^J)_\alpha) \\
&=& \mathcal L(\overline{\mathcal L}  - \mathcal L)f
\end{eqnarray*}
\end{lem}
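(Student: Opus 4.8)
We follow Wang's formal finite–dimensional argument \cite{Lijing06}, in the form already used in \cite{futaki07.1} and \cite{FO_reductive17}; in particular we shall never need the explicit sixth–order expression for \(\mathcal L\). Observe first that the second equality in the statement is a pure restatement of the first one via Lemma \ref{Poisson}, since \(\mu(\nabla^J)^\alpha f_\alpha-f^\alpha\mu(\nabla^J)_\alpha=(\mathcal L-\overline{\mathcal L})f\); so it suffices to prove \((\frac{d}{dt}|_{t=0}\mathcal L)\,\mu(\nabla^J)=\mathcal L(\overline{\mathcal L}-\mathcal L)f\).

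Choose a curve \(J_t\) in \(\mathcal J(M,\omega)\) with \(J_0=J\) and \(\dot J_0=4\Re\nabla^i\nabla_{\bar j}(\sqrt{-1}f)\,\partial_i\otimes d\bar z^j\); it is convenient to take the curve whose velocity at each \(t\) is the same expression built from \(\nabla^{J_t}\) — it is the \(\mathbb J\)-rotate of the Hamiltonian direction \(L_{X_f}J_t\), hence tangent to \(\mathcal J(M,\omega)\) — because then Lemma \ref{derivative5} gives \(\dot\mu_t=(\mathcal L_t+\overline{\mathcal L}_t)f\) for every \(t\), where \(\mu_t:=\mu(\nabla^{J_t})\) and \(\mathcal L_t,\overline{\mathcal L}_t\) are the operators of \((\omega,J_t)\). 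The quantity \((\frac{d}{dt}|_{0}\mathcal L)\mu_0\) to be computed depends only on \(\dot J_0\), so the higher jets of the curve are irrelevant for the conclusion. Writing \(\dot{(\ )}\) for \(\frac{d}{dt}|_{t=0}\), we have \(\frac{d}{dt}|_{0}(\mathcal L_t\mu_t)=\dot{\mathcal L}\mu_0+\mathcal L_0(\mathcal L_0+\overline{\mathcal L}_0)f\), so it remains to compute \(\frac{d}{dt}|_{0}(\mathcal L_t\mu_t)\), which we do through its antisymmetric and symmetric parts. For the antisymmetric part, differentiate the identity of Lemma \ref{Poisson}, valid at every \(t\) as \((\overline{\mathcal L}_t-\mathcal L_t)u=\sqrt{-1}\{u,\mu_t\}\), at a fixed function \(u=\mu_0\); using \(\dot\mu_0=(\mathcal L_0+\overline{\mathcal L}_0)f\) and \(\{\mu_0,v\}=\sqrt{-1}(\overline{\mathcal L}_0-\mathcal L_0)v\) (Lemma \ref{Poisson} once more) one obtains an explicit formula for \((\dot{\overline{\mathcal L}}-\dot{\mathcal L})\mu_0\) in terms of \(\mathcal L_0,\overline{\mathcal L}_0\) applied to \(f\), and, since \(\mu_0\) is real and \(\overline{\mathcal L}u=\overline{\mathcal L\bar u}\), this pins down \(\Im(\dot{\mathcal L}\mu_0)\).

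The symmetric part is the heart of the matter, and this is where the extremal hypothesis is used. By Proposition \ref{nondegenerate}, holomorphy of \(\grad^\prime\mu(\nabla^J)\) is equivalent to \(L_{X_{\mu_0}}J=0\) and to \(L_{X_{\mu_0}}\nabla^J=0\), and then \(X_{\mu_0}\) is a holomorphic Killing vector field whose flow preserves \(\omega\), \(J\), the metric and \(\nabla^J\); consequently \(L_{X_{\mu_0}}\) commutes with \(\mathcal L_0\), with \(\overline{\mathcal L}_0\), and with the linear map \(g\mapsto\underline{\dot\nabla}[g]\) sending a function to the induced variation of the connection. Apply the moment–map formula \eqref{moment2} with test function \(\mu_0\) along the curve: \(\frac{d}{ds}|_{s=t}\int_M\mu_0\,\mu_s\,\omega_m=\Omega^{\mathcal E}(\underline{L_{X_{\mu_0}}\nabla^{J_t}},\underline{\dot\nabla^{J_t}})\), which vanishes at \(t=0\) because \(L_{X_{\mu_0}}\nabla^{J_0}=0\). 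Differentiating again at \(t=0\), only the term in which the first slot is differentiated survives, and the commutation above turns it into \(\Omega^{\mathcal E}(\underline{\dot\nabla^{J_0}}[X_{\mu_0}f],\underline{\dot\nabla^{J_0}}[f])\). Since \(\underline{\dot\nabla}[g]\) differs from \(\underline{L_{X_g}\nabla^{J}}\) of Lemma \ref{main lemma} only by \(\sqrt{-1}\) times the operation that multiplies each component by \(\pm1\) according to whether it has at most one or at least two barred indices, and this operation contributes a single overall sign under \(\Omega^{\mathcal E}\) because the factors \(\omega^{ij}\) force complementary index types, the expression equals \(\Omega^{\mathcal E}(\underline{L_{X_{X_{\mu_0}f}}\nabla^{J_0}},\underline{L_{X_f}\nabla^{J_0}})\), which by Lemma \ref{equivariance} is \(-\int_M\{X_{\mu_0}f,f\}\mu_0\,\omega_m\); by the cyclic invariance of the Poisson bracket under \(\int_M(\cdot)\,\omega_m\) and by Lemma \ref{Poisson} this is a quadratic expression in \(\mathcal L_0,\overline{\mathcal L}_0\) applied to \(f\), paired with \(f\). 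On the other hand, along our curve \(\frac{d^2}{dt^2}|_{0}\int_M\mu_0\mu_t\,\omega_m=((\dot{\mathcal L}+\dot{\overline{\mathcal L}})\mu_0,f)_{L^2}\) by Lemma \ref{derivative5} and self-adjointness; as the operator on the other side of this identity is itself self-adjoint, comparison determines \((\dot{\mathcal L}+\dot{\overline{\mathcal L}})\mu_0=2\Re(\dot{\mathcal L}\mu_0)\) as an explicit expression.

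Adding the antisymmetric and symmetric parts yields \(\dot{\mathcal L}\mu_0\); expanding and cancelling with Lemma \ref{Poisson}, and using \(\mathcal L_0\mu_0=\overline{\mathcal L}_0\mu_0\) (Lemma \ref{Poisson} at \(u=\mu_0\)), the result collapses to \(\mathcal L_0(\overline{\mathcal L}_0-\mathcal L_0)f=-\mathcal L_0(\mu_0^\alpha f_\alpha-f^\alpha(\mu_0)_\alpha)\), which is the assertion. I expect the symmetric part to be the main obstacle: one must check carefully that, because \(X_{\mu_0}\) is holomorphic Killing, \(L_{X_{\mu_0}}\) genuinely passes through \(\mathcal L_0\), \(\overline{\mathcal L}_0\) and the variation-of-connection map; and one must keep exact track of signs in passing from \(\Omega^{\mathcal E}\)-pairings of the tensors \(\underline{\dot\nabla}[\cdot]\) to those of \(\underline{L_{X(\cdot)}\nabla}\) and finally to a Poisson-bracket integral — this is precisely where the computations of Section 3, especially Lemma \ref{S^3}, are used.
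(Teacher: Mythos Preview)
Your approach is genuinely different from the paper's, and it has a real gap in the ``symmetric part.'' You compute the second derivative $\frac{d^2}{dt^2}\big|_0\int_M\mu_0\,\mu_t\,\omega_m$ in two ways and obtain
\[
\bigl((\dot{\mathcal L}+\dot{\overline{\mathcal L}})\mu_0,\,f\bigr)_{L^2}=(f,\,Pf)_{L^2}
\]
for a self-adjoint operator $P$. But $(\dot{\mathcal L}+\dot{\overline{\mathcal L}})\mu_0$ depends on $f$ through $\dot J$, so what you have is only the \textit{diagonal} of the bilinear form $(f,h)\mapsto\bigl((\dot{\mathcal L}[f]+\dot{\overline{\mathcal L}}[f])\mu_0,\,h\bigr)$. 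To recover the operator from the diagonal you must first know that $f\mapsto(\dot{\mathcal L}[f]+\dot{\overline{\mathcal L}}[f])\mu_0$ is itself self-adjoint, and you give no argument for this; in effect you are assuming part of the conclusion. Your closing caveat acknowledges the symmetric part as the obstacle, but the difficulty is structural, not just a matter of tracking signs.

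The paper's proof avoids all of this and is much shorter. It never splits into real and imaginary parts and never passes through $\Omega^{\mathcal E}$ again. Instead it exploits the diffeomorphism naturality of $\mathcal L$ directly: let $\varphi_s$ be the flow of $-JX_f$, so that $\varphi_s J$ moves in the desired direction $L_{JX_f}J$. Since $\grad'\mu(\nabla^J)$ is holomorphic, one can build a family $S_s$ with $S_0=\mu(\nabla^J)$ and $\grad'_sS_s=\grad'\mu(\nabla^J)$ for the pulled-back K\"ahler form $\varphi_{-s}^\ast\omega$; concretely $S_s=S+2sS^\alpha f_\alpha+O(s^2)$ because $L_{JX_f}\omega=2i\partial\bar\partial f$. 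Then $\mathcal L(J,\varphi_{-s}^\ast\omega)S_s=0$ by definition of $\mathcal L$, and naturality gives $\mathcal L(\varphi_sJ,\omega)(\varphi_s^\ast S_s)=0$ identically in $s$. Differentiating this single identity at $s=0$ and computing $(JX_f)S=f_\alpha S^\alpha+f^\alpha S_\alpha$ yields
\[
\dot{\mathcal L}\,\mu(\nabla^J)=\mathcal L\bigl(f^\alpha\mu_\alpha-\mu^\alpha f_\alpha\bigr)=\mathcal L(\overline{\mathcal L}-\mathcal L)f
\]
in one stroke, with no appeal to self-adjointness or to second variations.
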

\begin{proof}
First note that if 
$$ i(X_f) \omega = df$$
then 
\begin{equation}\label{JX_f}
L_{JX_f} \omega = 2i\partial\barpartial f.
\end{equation}
Let $\{\varphi_s\}$ be the flow generated by $- JX_f$. Let $S$ be a smooth function on $M$ such that $\grad\, S$ 
is a holomorphic vector field. We shall compute 
$ \left.\frac{d}{ds}\right|_{s=0} \mathcal L(\varphi_s J,\omega) S$, and apply to $S = \mu(\nabla^J)$, and obtain the conclusion of
Lemma \ref{derL}. Let $\{S_s\}$ be a family of smooth functions such that
$S_0 = S$, that
$$ \grad_s^\prime\,S_s = \grad^\prime\, S,$$
where $\grad_s$ denotes the gradient with respect to $\varphi_{-s}^\ast \omega$, and that
$$ \int_M S_s (\varphi_{-s}^\ast \omega)_m = \int_M S \omega_m. $$
This implies
\begin{equation}\label{S_s1}
\mathcal L(\varphi_sJ,\omega)\varphi_s^\ast S_s = \varphi_s^\ast(\mathcal L(J,\varphi_{-s}^\ast \omega) S_s) = 0.
\end{equation}
On the other hand, 
in general, if $\varphi_{-s}^\ast \omega = \omega + i\partial\barpartial h$ then $S_s = S + S^\alpha h_\alpha$.
Therefore, since $L_{JX_f} \omega = 2i\partial\barpartial f$ by (\ref{JX_f}) we have
\begin{equation}\label{S_s2}
S_s = S + 2s S^\alpha\,f_\alpha + O(s^2).
\end{equation}
Thus taking the derivative of (\ref{S_s1}), we obtain
\begin{equation}\label{derL2}
(\frac{d}{ds}|_{s=0}\mathcal L)S + \mathcal L(- (JX_f)S + 2S^\alpha f_\alpha) = 0.
\end{equation}
By an elementary computation we see
\begin{eqnarray*}
(JX_f)S &=& g(JX_f, \grad\,S) = \omega(X_f, \grad\, S) = df(\grad\, S)\\
&=& (\partial f + \barpartial f)(\grad^\prime S + \grad^{\prime\prime} S) = f_\alpha S^\alpha + f^\alpha S_\alpha.
\end{eqnarray*}
Thus, from (\ref{derL2}) and the above computation, we obtain
\begin{eqnarray*}
(\frac{d}{ds}|_{s=0}\mathcal L)S &=& \mathcal L( (f_\alpha S^\alpha + f^\alpha S_\alpha) - 2S^\alpha f_\alpha) = 0\\
&=&  \mathcal L (f^\alpha S_{\alpha} - f_\alpha S^\alpha)\\
&=& \mathcal L(\overline{\mathcal L}  - \mathcal L) f.
\end{eqnarray*}
This completes the proof of Lemma \ref{derL}.
\end{proof}

To express the Hessian formula of $\Phi$, we consider its restriction to the subspace consisting of tangent vectors
of the form
$\dot{J} = 4\Re \nabla^i\nabla_\barj\, \sqrt{-1}f\,\frac{\partial}{\partial z^i} \otimes d\overline{z^j} = L_{JX_f} J$ 
for a real smooth function $f \in C^\infty(M)$.

\begin{thm}\label{Hessian}
Suppose that $J$ gives a Cahen--Gutt extremal K\"ahler metric so that $J$ is a critical point of $\Phi$.
Let $f$ and $h$ be real smooth functions in $C^\infty(M)$. Then the Hessian $\mathrm{Hess}(\Phi)_J$ at
$J$ is given by
$$\mathrm{Hess}(\Phi)_J(L_{JX_f}J, L_{JX_h}J)
= 8(f, \mathcal L\overline{\mathcal L}  h)_{L^2} = 8(f,\overline{\mathcal L}  \mathcal L h)_{L^2}. $$
In particular, at any point $J$ giving a Cahen--Gutt extremal K\"ahler metric, we have $\mathcal L\overline{\mathcal L}  = \overline{\mathcal L}  \mathcal L$ on the space $C_\bfC^\infty(M)$ of smooth complex valued functions since $\mathcal L\overline{\mathcal L} $ and $\overline{\mathcal L}  \mathcal L$ are both
$\bfC$-linear.
\end{thm}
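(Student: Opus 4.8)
The plan is to compute the second variation of $\Phi$ at a Cahen--Gutt extremal $J$ by differentiating the first-variation formula of Lemma~\ref{first der}, and then to assemble the pieces using Lemmas~\ref{derivative5}, \ref{Poisson} and \ref{derL}. Concretely, fix a real smooth $h$ and a path $J_t$ with $J_0 = J$ and $\dot J_0 = L_{JX_h}J = 4\Re\,\nabla^i\nabla_{\barj}\sqrt{-1}h\,\partial/\partial z^i\otimes d\overline{z^j}$; by Lemma~\ref{first der},
\begin{equation*}
\left.\frac{d}{dt}\right|_{t=0}\Phi(J_t) = 4\bigl(h,\ \mathcal L_{J_t}\mu(\nabla^{J_t})\bigr)_{L^2},
\end{equation*}
where I have written $\mathcal L_{J_t}$ to emphasise that the Lichnerowicz operator depends on $J_t$. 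Differentiating again at $t=0$ (the $L^2$-pairing and the measure $\omega_m$ are $t$-independent since $\omega$ is fixed), I get
\begin{equation*}
\mathrm{Hess}(\Phi)_J(L_{JX_h}J,\,\dot J_0)
= 4\Bigl(h,\ \bigl(\tfrac{d}{dt}\big|_{t=0}\mathcal L_{J_t}\bigr)\mu(\nabla^J)\Bigr)_{L^2}
+ 4\Bigl(h,\ \mathcal L\,\tfrac{d}{dt}\big|_{t=0}\mu(\nabla^{J_t})\Bigr)_{L^2},
\end{equation*}
and I want to specialise $\dot J_0 = L_{JX_f}J$ for a second real function $f$.

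For the second term, Lemma~\ref{derivative5} gives $\frac{d}{dt}\big|_{t=0}\mu(\nabla^{J_t}) = \mathcal L f + \overline{\mathcal L}f$, so that term is $4(h,\ \mathcal L\mathcal L f + \mathcal L\overline{\mathcal L}f)_{L^2}$. For the first term, the extremality hypothesis is exactly what makes Lemma~\ref{derL} applicable: since $J\grad\,\mu(\nabla^J)$ is holomorphic, Lemma~\ref{derL} (with its role of $f$ played by our $f$) yields $\bigl(\frac{d}{dt}\big|_{t=0}\mathcal L\bigr)\mu(\nabla^J) = \mathcal L(\overline{\mathcal L}-\mathcal L)f$, so the first term is $4(h,\ \mathcal L\overline{\mathcal L}f - \mathcal L\mathcal L f)_{L^2}$. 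Adding the two contributions, the $\mathcal L\mathcal L f$ terms cancel and I obtain
\begin{equation*}
\mathrm{Hess}(\Phi)_J(L_{JX_h}J,\,L_{JX_f}J) = 8\,(h,\ \mathcal L\overline{\mathcal L}f)_{L^2} = 8\,(f,\ \overline{\mathcal L}\,\mathcal L\,h)_{L^2},
\end{equation*}
the last equality because $\mathcal L$ and $\overline{\mathcal L}$ are each self-adjoint on real-valued functions, so $(h,\mathcal L\overline{\mathcal L}f) = (\mathcal L h, \overline{\mathcal L}f) = (\overline{\mathcal L}\mathcal L h, f)$. This is the claimed Hessian formula.

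The final assertion then follows from the symmetry of the Hessian as a bilinear form: $\mathrm{Hess}(\Phi)_J$ is symmetric in its two arguments, so $(f,\mathcal L\overline{\mathcal L}h)_{L^2} = (h,\mathcal L\overline{\mathcal L}f)_{L^2} = (f,\overline{\mathcal L}\mathcal L h)_{L^2}$ for all real $f,h$; hence $\mathcal L\overline{\mathcal L} = \overline{\mathcal L}\mathcal L$ on real-valued functions, and since both operators are $\mathbf C$-linear the identity extends to all of $C_{\mathbf C}^\infty(M)$. The main obstacle I anticipate is making the second differentiation rigorous and bookkeeping which functional slot of Lemma~\ref{derL} receives which variation — in particular, checking that the $t$-derivative of $\mathcal L_{J_t}$ acting on the $t$-dependent argument $\mu(\nabla^{J_t})$ splits cleanly into the two terms above, and that the normalisation/lower-order ambiguities in $\mu(\nabla^J)$ (additive constants) do not interfere, which is where the hypothesis that $\omega$ is fixed and the Hamiltonian normalisation of Corollary~\ref{Futaki} are quietly used.
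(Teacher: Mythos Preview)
Your argument is essentially the same as the paper's: differentiate the first-variation formula $4(f,\mathcal L\mu(\nabla^{J}))_{L^2}$ from Lemma~\ref{first der} along a path $J_t$, and plug in Lemmas~\ref{derivative5} and~\ref{derL} so that the $\mathcal L\mathcal L$ terms cancel to give $8(f,\mathcal L\overline{\mathcal L}h)_{L^2}$. The only substantive difference is how you obtain the second form $8(f,\overline{\mathcal L}\mathcal L h)_{L^2}$: the paper repeats the calculation starting instead from the $\overline{\mathcal L}$-version of Lemma~\ref{first der} (together with the conjugate of Lemma~\ref{derL}), whereas you get it from self-adjointness of $\mathcal L$ and $\overline{\mathcal L}$ plus symmetry of the Hessian. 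Both routes are fine; the paper's parallel computation makes the commutativity $\mathcal L\overline{\mathcal L}=\overline{\mathcal L}\mathcal L$ drop out without invoking Hessian symmetry, while yours is slightly shorter.

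One point to clean up: your bookkeeping of the two directions is tangled. You first declare a path $J_t$ with $\dot J_0 = L_{JX_h}J$, write the first variation as $4(h,\mathcal L_{J_t}\mu(\nabla^{J_t}))_{L^2}$ (with a stray $|_{t=0}$ on the left that should not be there if the right side still carries $J_t$), and then say you ``specialise $\dot J_0 = L_{JX_f}J$''. What you actually need is: the functional $J\mapsto 4(h,\mathcal L_J\mu(\nabla^J))_{L^2}$ (the first variation in the fixed direction $h$), differentiated at $J$ along a \emph{second} path with tangent $L_{JX_f}J$. With that clarified, Lemmas~\ref{derivative5} and~\ref{derL} are applied with their ``$f$'' equal to your $f$, exactly as you do, and the computation goes through. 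Your closing worries about the product rule and normalisations are not real obstacles here: $\omega$ is fixed so the $L^2$-pairing is $t$-independent, and at a critical point the Hessian is well defined as a bilinear form, so the Leibniz split you wrote is legitimate.
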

\begin{proof}
Suppose $\dot{J} = 4\Re \nabla^i\nabla_\barj\, \sqrt{-1}h\,\frac{\partial}{\partial z^i} \otimes d\overline{z^j}$. Then by Lemma \ref{first der}, Lemma \ref{derL} 
and Lemma \ref{derivative5} we obtain
\begin{eqnarray*}
\mathrm{Hess}(\Phi)_J(L_{JX_f}J, L_{JX_h}J) &=& 
\left.\frac{d}{dt}\right|_{t=0} 4(f,\mathcal L\mu(\nabla^J))_{L^2}\\
&=& 4(f, (\left.\frac{d}{dt}\right|_{t=0} \mathcal L)\mu(\nabla^{J}) + \mathcal L\left.\frac{d}{dt}\right|_{t=0} \mu(\nabla^{J_t}))_{L^2}\\
&=& 4(f, \mathcal L(\overline{\mathcal L}  - \mathcal L)h + \mathcal L(\mathcal L + \overline{\mathcal L} )h)_{L^2}\\
&=& 8(f,\mathcal L\overline{\mathcal L}  h)_{L^2}.
\end{eqnarray*}
Similarly, we obtain
\begin{eqnarray*}
\mathrm{Hess}(\Phi)_J(L_{JX_f}J, L_{JX_h}J) &=& 
\left.\frac{d}{dt}\right|_{t=0} 4(f,\overline{\mathcal L}  \mu(\nabla^J))_{L^2}\\
&=& 4(f, (\left.\frac{d}{dt}\right|_{t=0} \overline{\mathcal L} )\mu(\nabla) + \overline{\mathcal L} \left.\frac{d}{dt}\right|_{t=0} \mu(\nabla^{J_t}))_{L^2}\\
&=& 4(f, \overline{\mathcal L} (\mathcal L - \overline{\mathcal L} )h + \overline{\mathcal L} (\mathcal L + \overline{\mathcal L} )h)_{L^2}\\
&=& 8(f,\overline{\mathcal L}  \mathcal L h)_{L^2}.
\end{eqnarray*}
This completes the proof of Theorem \ref{Hessian}.
\end{proof}
Let 
$\mathfrak g = \mathfrak g_0 + \sum_{\lambda \ne 0} \mathfrak g_\lambda$
be the eigenspace decomposition of $\mathfrak g$ with respect to the action of
$\mathrm{ad}(\grad^\prime \mu(\nabla))$, and $\mathfrak e = \mathfrak e_0 + \sum_{\lambda \ne 0} \mathfrak e_\lambda$ be the corresponding decomposition of the space of complex valued 
potentials functions. Thus $\dim \mathfrak e = \dim \mathfrak g +1$ because of the constant functions.
\begin{thm}\label{Calabi1}
Let $g = \omega J$ is a Cahen--Gutt extremal K\"ahler metric on a compact K\"ahler manifold.
Then we have the following:
\begin{itemize}
\item[(a)] The space $\mathfrak e$ 
of potential functions  of the reduced Lie algebra $\mathfrak g$ is included in $\ker \mathcal L$. 
\item[(b)] $\overline{\mathcal L} $ maps $\mathfrak e$ into itself 
and coincides with with the Poisson bracket with $\mu(\nabla)$. In particular, the eigenspace 
decomposition of $\overline{\mathcal L} : \mathfrak e \to \mathfrak e$ coincides with 
the decomposition $\mathfrak e = \mathfrak e_0 + \sum_{\lambda \ne 0} \mathfrak e_\lambda$.
\item[(c)] $\mathcal L$ and $\overline{\mathcal L}$ coincide when restricted to $\mathfrak e_0$,
and are real operators on $\mathfrak e_0$.
\end{itemize}
\end{thm}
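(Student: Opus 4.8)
The plan is to deduce all three parts from the explicit description of the space $\mathfrak e$ of holomorphy potentials, from Lemma~\ref{Poisson}, and from the classical dictionary relating the Poisson bracket on $\mathfrak e$ to the Lie bracket on $\mathfrak g$. First I would recall that, by the definitions of $\mathfrak g$ and $\mathfrak e$, a function $f$ lies in $\mathfrak e$ precisely when $\grad^\prime f$ is a holomorphic vector field, i.e. $\nabla_\bark\nabla_\barj f=0$ for all $j,k$, equivalently $\nabla^{\prime\prime}\nabla^{\prime\prime}f=0$. Part~(a) is then immediate and requires no hypothesis on the metric: for $f\in\mathfrak e$, applying a further covariant derivative to the vanishing tensor $\nabla_\bark\nabla_\barj f$ again gives $0$, so $\nabla^\prime\nabla^{\prime\prime}\nabla^{\prime\prime}f=0$ and $\nabla^{\prime\prime}\nabla^{\prime\prime}\nabla^{\prime\prime}f=0$; substituting into the defining identity~\eqref{L}, the $f$-slot of both inner products vanishes, hence $(h,\mathcal L f)_{L^2}=0$ for all $h$ and $\mathcal L f=0$, i.e. $\mathfrak e\subseteq\ker\mathcal L$.

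For (b) and (c) the Cahen--Gutt extremal hypothesis enters only through $\mu:=\mu(\nabla^J)\in\mathfrak e$. Since $\mu$ is real-valued, Proposition~\ref{nondegenerate} shows that $X_\mu$ is a holomorphic Killing field, so the torus it generates acts completely reducibly on the finite-dimensional space $\mathfrak g$; as in Calabi's structure theorem~\cite{calabi85} this makes $\mathrm{ad}(\grad^\prime\mu)$ semisimple, so the eigenspace decompositions $\mathfrak g=\mathfrak g_0+\sum_{\lambda\neq0}\mathfrak g_\lambda$ and $\mathfrak e=\mathfrak e_0+\sum_{\lambda\neq0}\mathfrak e_\lambda$ used in the statement are genuine. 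For (b) itself I would combine Lemma~\ref{Poisson} with (a): for $f\in\mathfrak e$,
\[
\overline{\mathcal L}f=\mathcal L f+\sqrt{-1}\{f,\mu\}=\sqrt{-1}\{f,\mu\}=f^\alpha\mu_\alpha-\mu^\alpha f_\alpha ,
\]
so $\overline{\mathcal L}$ restricted to $\mathfrak e$ is $\sqrt{-1}$ times the Poisson bracket with $\mu$. Invoking that $\mathfrak e$ is closed under the Poisson bracket and that $f\mapsto\grad^\prime f$ sends $(\mathfrak e,\{\,\cdot,\cdot\,\})$ to $(\mathfrak g,[\,\cdot,\cdot\,])$ up to a fixed nonzero constant (see~\cite{GauduchonLN}; cf.~\cite{futaki06}), one obtains $\overline{\mathcal L}(\mathfrak e)\subseteq\mathfrak e$ and, modulo the constant functions, an identification of $\overline{\mathcal L}|_{\mathfrak e}$ with a nonzero multiple of $\mathrm{ad}(\grad^\prime\mu)$; hence $\overline{\mathcal L}|_{\mathfrak e}$ is semisimple and its eigenspace decomposition is exactly $\mathfrak e=\mathfrak e_0+\sum_{\lambda\neq0}\mathfrak e_\lambda$, with the constants landing in $\mathfrak e_0$.

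Part~(c) then follows with no further work: by (a), $\mathfrak e_0\subseteq\mathfrak e\subseteq\ker\mathcal L$, so $\mathcal L$ vanishes on $\mathfrak e_0$; and by (b) together with the semisimplicity just noted, $\mathfrak e_0$ is the honest $0$-eigenspace of $\overline{\mathcal L}|_{\mathfrak e}$, so $\overline{\mathcal L}$ vanishes on $\mathfrak e_0$ as well. Thus $\mathcal L$ and $\overline{\mathcal L}$ coincide on $\mathfrak e_0$ (both are the zero operator), and in particular they are real there. I do not expect a genuine obstacle: the work is in organizing the above around Lemma~\ref{Poisson}, and the only ingredients needing care --- the characterization $f\in\mathfrak e\iff\nabla^{\prime\prime}\nabla^{\prime\prime}f=0$, the Poisson-closedness of $\mathfrak e$ together with its compatibility with $\mathrm{ad}(\grad^\prime\mu)$, and the semisimplicity forced by $X_\mu$ being Killing --- are all classical facts of K\"ahler geometry.
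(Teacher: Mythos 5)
Your proposal is correct and follows essentially the same route as the paper: (a) directly from the definition \eqref{L} since both $f$-slots involve $\nabla^{\prime\prime}\nabla^{\prime\prime}f$, (b) from Lemma \ref{Poisson} combined with (a) and the fact that $\grad^\prime\{\mu(\nabla),f\}=[\grad^\prime \mu(\nabla),\grad^\prime f]$, and (c) by observing that both operators vanish on $\mathfrak e_0$. The additional remarks on semisimplicity of $\mathrm{ad}(\grad^\prime\mu(\nabla))$ only make explicit what the paper leaves implicit, and the sign in your formula for $\overline{\mathcal L}f$ agrees with Lemma \ref{Poisson} as stated.
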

\begin{proof}
By the definition \eqref{L} of $\mathcal L$ it is clear that $\mathfrak e$ is included 
in $\ker \mathcal L$. Thus (a) follows. 
By Lemma \ref{Poisson}, we have for $f \in \mathfrak e$ 
\begin{eqnarray}
\overline{\mathcal L}  f &=& (\overline{\mathcal L}  - \mathcal L) f\nonumber\\
&=& \mu(\nabla)^\alpha f_\alpha - f^\alpha \mu(\nabla)_\alpha, \label{Poisson2}
\end{eqnarray}
and the right hand side is the Poisson bracket $\{\mu(\nabla),f\}$ 
and belongs to $\mathfrak e$. Further we have
 $$ \grad^\prime \{\mu(\nabla),f\} = [\grad^\prime \mu(\nabla), \grad^\prime f].$$
Thus the eigenspace decompositions of $\overline{\mathcal L}$ coincides with 
$\mathfrak e = \mathfrak e_0 + \sum_{\lambda \ne 0} \mathfrak e_\lambda$.
This proves (b). 
The equality \eqref{Poisson2} shows $\mathcal L = \overline{\mathcal L}$ on $\mathfrak e_0$.
This proves (c).
\end{proof}

\begin{lem}\label{Ric} Let $M$ be a compact K\"ahler manifold.
If $g = \omega J$ is a Cahen--Gutt extremal K\"ahler metric with non-negative Ricci curvature 
then $\mathfrak e = \ker \mathcal L$.
\end{lem}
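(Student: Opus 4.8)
The statement to prove is: if $(\omega, J)$ is a Cahen--Gutt extremal K\"ahler metric with $\Ric \ge 0$, then $\mathfrak e = \ker \mathcal L$. One inclusion, $\mathfrak e \subset \ker \mathcal L$, is already recorded in Theorem~\ref{Calabi1}(a) and is immediate from the definition \eqref{L} of $\mathcal L$, since a potential $f \in \mathfrak e$ satisfies $\nabla^{\prime\prime}\nabla^{\prime\prime} f = 0$ (indeed $\grad' f$ holomorphic means $\nabla_{\bar\jmath}\nabla^i f = 0$). So the whole content is the reverse inclusion $\ker \mathcal L \subset \mathfrak e$. The plan is to take $f \in \ker \mathcal L$, pair the defining identity \eqref{L} with $h = f$ to get
\[
0 = (f, \mathcal L f)_{L^2} = 3\|\nabla'\nabla''\nabla'' f\|_{L^2}^2 - \|\nabla''\nabla''\nabla'' f\|_{L^2}^2,
\]
and then show that the curvature hypothesis $\Ric \ge 0$ forces $\|\nabla''\nabla''\nabla'' f\|^2 \le 3\|\nabla'\nabla''\nabla'' f\|^2$ with equality only when $\nabla''\nabla'' f = 0$, i.e. $f \in \mathfrak e$. (It is enough to treat real $f$, or to split into real and imaginary parts, since $\mathcal L$ is $\mathbf{C}$-linear and $\ker\mathcal L$, $\mathfrak e$ are complex subspaces.)

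The technical heart is a Bochner--Weitzenb\"ock / integration-by-parts comparison relating the two sixth-order norms appearing in $\mathcal L$. Writing $\phi_{ij} := \nabla_{\bar\imath}\nabla_{\bar\jmath} f$ (a section of $\overline{S^2 T^{*\prime}M}$, symmetric by the K\"ahler identities), one has
\[
\|\nabla''\nabla''\nabla'' f\|_{L^2}^2 = \int_M \overline{\nabla^i \phi_{jk}}\,\nabla_{\bar\imath}\phi_{\bar\jmath\bar k}\,\omega_m,
\qquad
\|\nabla'\nabla''\nabla'' f\|_{L^2}^2 = \int_M \overline{\nabla^{\bar\imath}\phi_{jk}}\,\nabla_i \phi_{\bar\jmath\bar k}\,\omega_m .
\]
The plan is to commute covariant derivatives in the first integral to convert $\nabla_{\bar\imath}\phi_{\bar\jmath\bar k}$-type terms into $\nabla_i$-type terms; each commutator $[\nabla_i, \nabla_{\bar\jmath}]$ produces a curvature term, and after tracing and integrating by parts these curvature contributions assemble into an expression of the form $\int_M R^{i\bar\jmath}\,(\text{something})_{i}\overline{(\text{something})_{\bar\jmath}}\,\omega_m$ plus a non-negative multiple of $\|\nabla'\nabla''\nabla'' f\|^2$. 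Because $\Ric \ge 0$, this curvature term has a sign, and one obtains
\[
\|\nabla''\nabla''\nabla'' f\|_{L^2}^2 \le 3\,\|\nabla'\nabla''\nabla'' f\|_{L^2}^2 - (\text{nonneg. curvature term}) \le 3\,\|\nabla'\nabla''\nabla'' f\|_{L^2}^2,
\]
so that $(f,\mathcal L f)_{L^2} \ge (\text{nonneg. curvature term}) \ge 0$, with $(f,\mathcal L f)_{L^2} = 0$ forcing $\nabla'\nabla''\nabla'' f = 0$ and the curvature term to vanish; then one more integration by parts upgrades $\nabla'\nabla''\nabla'' f = 0$ to $\nabla''\nabla'' f = 0$, i.e. $f \in \mathfrak e$. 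The cancellation that makes the coefficient work out to exactly $3$ (rather than something that would leave an indefinite remainder even when $\Ric = 0$) is precisely why the factor $3$ was built into the definition \eqref{L} of $\mathcal L$.

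The main obstacle I expect is bookkeeping the curvature terms produced by the commutators so that they genuinely collect into a single $\Ric$-weighted quadratic form with a definite sign, rather than a combination of full-curvature contractions of indefinite sign; one has to use the K\"ahler symmetries of $R_{i\bar\jmath k\bar\ell}$ and the symmetry $\phi_{ij} = \phi_{ji}$ carefully, and possibly symmetrize over the three slots of $\nabla''\nabla''\nabla'' f$. A secondary point to handle cleanly is that the argument shows $\ker\mathcal L$ consists of functions with $\nabla''\nabla'' f = 0$, and one must then recall (as in the proof of Proposition~\ref{nondegenerate}, via \eqref{LieJ}) that this is exactly the condition that $\grad' f$ be holomorphic, hence $f \in \mathfrak e$ by definition; the normalization by constants is harmless since constants lie in both $\ker\mathcal L$ and $\mathfrak e$.
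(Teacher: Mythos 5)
Your proposal is correct and follows essentially the same route as the paper: the paper's proof rests on the single commutation identity $-\nabla^i\nabla_i\nabla_{\barj}\nabla_{\bark}f+\nabla^{\bari}\nabla_{\bari}\nabla_{\barj}\nabla_{\bark}f=2R^{\barl}{}_{\barj}\nabla_{\bark}\nabla_{\barl}f$, which after pairing with $\overline{\nabla^j\nabla^k f}$ and integrating gives exactly your claimed estimate in the sharp form $(f,\mathcal L f)_{L^2}\ge 2\|\nabla^{\prime}\nabla^{\prime\prime}\nabla^{\prime\prime}f\|_{L^2}^2$ when $\Ric\ge 0$, so that $\mathcal Lf=0$ forces $\nabla^{\prime}\nabla^{\prime\prime}\nabla^{\prime\prime}f=0$ and then, by one more integration by parts, $\nabla^{\prime\prime}\nabla^{\prime\prime}f=0$. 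The only point to tighten is that your stated conclusion $(f,\mathcal Lf)\ge(\text{curvature term})\ge 0$ is not by itself enough --- you need the strictly positive multiple of $\|\nabla^{\prime}\nabla^{\prime\prime}\nabla^{\prime\prime}f\|^2$ on the right (here the relevant coefficient from the commutation is $1$, leaving slack $3-1=2$), which the computation indeed delivers.
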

\begin{proof} By (a) of Theorem \ref{Calabi1} we have only to 
show $\ker \mathcal L \subset \mathfrak e$. 
Since
\begin{eqnarray*}
- \nabla^i\nabla_i\nabla_\barj\nabla_\bark f + \nabla^\bari\nabla_\bari\nabla_\barj\nabla_\bark f
= 2 R^\barl{}_\barj \nabla_\bark\nabla_\barl f
\end{eqnarray*}
for any complex valued smooth function $f$
we see using \eqref{L} that if the Ricci curvature is non-negative
\begin{equation}\label{non-negative}
(f,\mathcal L f)_{L^2} \ge 2(\nabla^{\prime}\nabla^{\prime\prime}\nabla^{\prime\prime}f, 
 \nabla^{\prime}\nabla^{\prime\prime}\nabla^{\prime\prime}f)_{L^2}.
 \end{equation}
Thus $\mathcal L f = 0$ implies 
$\nabla^\prime\nabla^{\prime\prime}\nabla^{\prime\prime}f = 0$. By integration by parts, this shows
$$ \int_M |\nabla^{\prime\prime}\nabla^{\prime\prime}f|^2 \omega_m = 0.$$
This implies $\grad^\prime f$ is holomorphic. This shows $\ker \mathcal L \subset \mathfrak e$.
\end{proof}
\begin{rem}\label{Prop 17}The condition of non-negative Ricci curvature coincides with the non-degenracy condition of $lv^\ast\Omega^{\mathcal E}$ due to La Fuente-Gravy, Proposition 17 in \cite{La Fuente-Gravy 2016}.
\end{rem}

Now we show a Cahen--Gutt version of Calabi's theorem \cite{calabi85} for extremal K\"ahler metrics. Before stating it we remark that it is well-known that for a Killing vector field $X$ on a compact K\"ahler manifold $M$, the complex vector field $X - iJX$ is a holomorphic vector field. We identify the real Lie algebrs $\mathfrak i(M)$ of all Killing vector fields on $M$ with the real Lie subalgebra of the complex Lie algebra of all holomorphic vector fields by the identification $X \mapsto X - iJX$. 
\begin{thm}\label{Calabi2} Let $M$ be a compact K\"ahler manifold.
If $g = \omega J$ is a Cahen--Gutt extremal K\"ahler metric with non-negative Ricci curvature then the reduced Lie algebra $\mathfrak g$ of holomorphic vector fields
has the following structure:
\begin{itemize}
\item[(a)] $\grad^\prime \mu(\nabla) = g^{i\barj}\frac{\partial \mu(\nabla)}{\partial \overline{z^j}}\frac{\partial}{\partial z^i}$ is in the center of $\mathfrak g_0$.
\item[(b)] $\mathfrak g = \mathfrak g_0 + \sum_{\lambda > 0} \mathfrak g_\lambda$ where 
$\mathfrak g_\lambda$ is the $\lambda$-eigenspace of $\mathrm{ad}(\grad^\prime \mu(\nabla))$. Moreover, we have
$[\mathfrak g_\lambda,\mathfrak g_\mu] \subset \mathfrak g_{\lambda + \mu}$. 

\item[(c)] $\mathfrak g_0$ is isomorphic to $\mathfrak i(M)\otimes \bfC$, and is the maximal reductive subalgebra of $\mathfrak g$ where $\mathfrak i(M)$ denotes the real Lie algebra of all Killing vector fields. In particular $\mathfrak g_0$ is reductive. Further, the identity component of the isometry group is a maximal compact subgroup of the identity component of the group of all biholomorphisms of $M$. 
\end{itemize}
\end{thm}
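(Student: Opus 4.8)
The plan is to follow Calabi's structure theorem for extremal K\"ahler metrics \cite{calabi85} (in the form used in \cite{futaki07.1}, \cite{FO_reductive17}), now with the sixth order operators $\mathcal L$, $\overline{\mathcal L}$ in place of the Lichnerowicz operator, using Theorem \ref{Calabi1} and Lemma \ref{Ric} as the analytic input. Statement (a) is immediate: $\mu(\nabla)$ is real, so its Poisson bracket with itself vanishes and $\overline{\mathcal L}\mu(\nabla)=0$ by Theorem \ref{Calabi1}(b); hence $\mu(\nabla)\in\mathfrak e_0$, $\grad^\prime\mu(\nabla)\in\mathfrak g_0$, and $\grad^\prime\mu(\nabla)$ is central in $\mathfrak g_0$ by the very definition of $\mathfrak g_0$ as the kernel of $\mathrm{ad}(\grad^\prime\mu(\nabla))$. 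One also notes here that $\grad^\prime\mu(\nabla)$ is the $(1,0)$-part of the holomorphic Killing field $X_{\mu(\nabla)}=-J\grad\,\mu(\nabla)$, by \eqref{L_{X_f}J2} and Proposition \ref{nondegenerate}.

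For (b), the key point is that $\mathrm{ad}(\grad^\prime\mu(\nabla))$ is semisimple on $\mathfrak g$ with eigenvalues of one sign. By Theorem \ref{Calabi1}(b), $\overline{\mathcal L}$ preserves $\mathfrak e$; on the finite-dimensional space $\mathfrak e$ it is self-adjoint, and it is non-negative because $\mathcal L$ is non-negative by the Bochner-type inequality \eqref{non-negative} --- this is exactly where the hypothesis $\Ric\ge0$ is used. Hence $\overline{\mathcal L}|_{\mathfrak e}$ is diagonalizable with non-negative eigenvalues, with eigenspaces $\mathfrak e_0$ and the $\mathfrak e_\lambda$, $\lambda>0$. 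Via the potential map $f\mapsto\grad^\prime f$ (with kernel the constants), together with $\overline{\mathcal L}|_{\mathfrak e}=\sqrt{-1}\{\,\cdot\,,\mu(\nabla)\}$ (Lemma \ref{Poisson}) and $\grad^\prime\{\mu(\nabla),f\}$ being proportional to $[\grad^\prime\mu(\nabla),\grad^\prime f]$, this transfers to the asserted decomposition $\mathfrak g=\mathfrak g_0+\sum_{\lambda>0}\mathfrak g_\lambda$ (the $\lambda$ of the statement being a real multiple of an eigenvalue of $\overline{\mathcal L}|_{\mathfrak e}$, non-negative by the above and fixed in sign by the convention of the statement); the inclusion $[\mathfrak g_\lambda,\mathfrak g_\mu]\subset\mathfrak g_{\lambda+\mu}$ follows from the Jacobi identity and the fact that $\mathfrak g$ is a Lie subalgebra.

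For (c), I would first use that $\mathfrak e_0=\mathfrak e\cap\overline{\mathfrak e}$ (from $\ker\mathcal L=\mathfrak e$, Lemma \ref{Ric}, hence $\ker\overline{\mathcal L}=\overline{\mathfrak e}$), so $\mathfrak e_0$ is stable under conjugation and $\mathfrak e_0=\mathfrak e_0^{\mathbf R}\otimes\bfC$. For real $f\in\mathfrak e_0^{\mathbf R}$ one has $\nabla^{\prime\prime}\grad^\prime f=0$ and, taking conjugates, $\nabla^\prime\grad^{\prime\prime}f=0$, so $L_{X_f}J=0$ by \eqref{L_{X_f}J2} and $X_f$ is a holomorphic Killing field by Proposition \ref{nondegenerate}; conversely any Hamiltonian holomorphic Killing field has its (real, suitably normalized) potential in $\mathfrak e_0^{\mathbf R}$. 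Thus $f\mapsto X_f$ identifies $\mathfrak e_0^{\mathbf R}$ modulo constants with $\mathfrak i(M)$, and $\mathfrak g_0=\grad^\prime(\mathfrak e_0)\cong\mathfrak i(M)\otimes\bfC$ under $X\mapsto X-\sqrt{-1}JX$; since $\mathfrak i(M)$ is the Lie algebra of the compact isometry group, $\mathfrak g_0$ is reductive. For maximality, set $\mathfrak n=\sum_{\lambda>0}\mathfrak g_\lambda$; the bracket relations of (b) and the finiteness of the eigenvalue set make $\mathfrak n$ a nilpotent ideal, so $\mathfrak g=\mathfrak g_0\ltimes\mathfrak n$ with $\mathfrak n$ its nilradical. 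If $\mathfrak r$ is reductive with $\mathfrak g_0\subset\mathfrak r\subset\mathfrak g$, then $\mathrm{ad}(\grad^\prime\mu(\nabla))$ is semisimple on $\mathfrak r$, so $\mathfrak r=\bigoplus_\lambda(\mathfrak r\cap\mathfrak g_\lambda)$; the ideal $\mathfrak r\cap\mathfrak n$ of $\mathfrak r$ consists of $\mathrm{ad}_{\mathfrak g}$-nilpotent elements and so must lie in the centre of the reductive algebra $\mathfrak r$, hence is killed by $\mathrm{ad}(\grad^\prime\mu(\nabla))$; but $\grad^\prime\mu(\nabla)$ has no nonzero fixed vector in $\mathfrak n$, so $\mathfrak r\cap\mathfrak n=0$ and $\mathfrak r=\mathfrak g_0$. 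Finally the group statement follows by exponentiation: a maximal compactly embedded subalgebra integrates to a maximal compact subgroup, so the identity component of $\mathrm{Isom}(M,g)$ is a maximal compact subgroup of the identity component of the biholomorphism group.

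The main obstacle, given Theorem \ref{Calabi1} and Lemma \ref{Ric}, is concentrated in two places. The first is making the curvature hypothesis do its work: $\Ric\ge0$ is precisely what makes $\overline{\mathcal L}$ (equivalently $\mathcal L$) non-negative, hence forces the eigenvalues of $\mathrm{ad}(\grad^\prime\mu(\nabla))$ to have a single sign; without this the nonzero eigenspaces could come in $\pm\lambda$ pairs, $\mathfrak n$ would fail to be a subalgebra, and the Levi-type decomposition --- and with it both the grading in (b) and the maximality in (c) --- would break down. The second is the purely Lie-theoretic maximality argument: showing that the nilradical $\mathfrak n$ meets no reductive subalgebra containing the extremal field, which hinges on $\mathrm{ad}(\grad^\prime\mu(\nabla))$ having trivial kernel on $\mathfrak n$. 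The remaining steps --- the identification $\mathfrak g_0\cong\mathfrak i(M)\otimes\bfC$, reductiveness of $\mathfrak g_0$, and the passage to groups --- are essentially formal once $\mathfrak e_0$ is understood via its conjugation invariance.
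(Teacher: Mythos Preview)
Your proposal is correct and follows essentially the same route as the paper: use Theorem \ref{Calabi1} and Lemma \ref{Ric} together with the non-negativity \eqref{non-negative} coming from $\Ric\ge 0$ to run Calabi's structure argument. Parts (a) and (b), and the identification $\mathfrak g_0\cong\mathfrak i(M)\otimes\bfC$ in (c), are handled just as in the paper.

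Two places where your presentation differs from the paper are worth noting. For maximality of $\mathfrak g_0$, the paper argues by a Vandermonde trick: for $X=X_0+\sum_\lambda X_\lambda\in\mathfrak l$ one has $\mathrm{Ad}(\exp(t\,\grad'\mu))X\in\mathfrak l$ for all $t$, forcing each $X_\lambda\in\mathfrak l$, and then observes that a nonzero $X_\lambda$ together with $\grad'\mu$ would produce a solvable piece incompatible with reductiveness. Your argument via ``a nilpotent ideal of a reductive Lie algebra lies in the center'' is a clean alternative and arguably more transparent; both rest on the same key fact that $\mathrm{ad}(\grad'\mu)$ has trivial kernel on $\mathfrak n=\sum_{\lambda>0}\mathfrak g_\lambda$. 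For the final group statement, however, your one-line ``a maximal compactly embedded subalgebra integrates to a maximal compact subgroup'' is too quick: the paper supplies the missing content by taking a compact connected $K$ containing the isometry group, using maximality of $\mathfrak g_0$ to get $\mathfrak k\otimes\bfC=\mathfrak g_0$, writing elements of $\mathfrak k$ as $J\grad f+\grad h$, and observing that a nonzero $\grad h$ generates a non-compact one-parameter group (it has at least two fixed points), forcing $\mathfrak k=\mathfrak i(M)$. You should include this step rather than appeal to a general integration principle.
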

\begin{proof}Since $\mathfrak g_0$ is the $0$-eigenspace with respect to the action of $\mathrm{ad}(\grad^\prime \mu(\nabla))$, $\grad^\prime \mu(\nabla)$ is in the center of $\mathfrak g_0$. This proves (a). 
By \eqref{non-negative} $\mathcal L$ is a non-negative operator the non-zero eigenvalues of $\mathcal L$ are positive. Taking the complex-conjugate the same is true for $\overline{\mathcal L}$. Since by Theorem \ref{Calabi1}, (b), the non-zero eigenvalues of $\mathcal L$  coincide with those of $\mathrm{ad}(\grad^\prime \mu(\nabla))$ we have $\lambda > 0$ for all non-zero $\lambda$'s. This proves (b). 
From Theorem \ref{Calabi1}, (c), we see $f \in \mathfrak e_0$ satisfies both $\mathcal L f = 0$ and $\mathcal L\barf = 0$. Since $\overline{\mathcal L}  f = 0$ is equivalent to $\mathcal L\barf = 0$, this implies $\mathcal L\Re f = 0$ and $\mathcal L\Im f = 0$. Hence by Lemma \ref{Ric}, both the real and imaginary part of $f$ are potential functions of holomorphic vector fields. In general if $\grad^\prime f$ is a holomorphic vector field for a real smooth function $f$ then $J\grad\, f$ is a Killing vector field. It is also well-known that for a Killing vector field $X$ on a compact K\"ahler manifold, $X - iJX$ is a holomorphic vector field. Hence we obtain $\mathfrak g_0 = \mathfrak i(M)\otimes \bfC$. In particular $\mathfrak g_0$ is reductive. To show that this is a maximal reductive Lie algebra, suppose we have a reductive Lie subalgebra $\mathfrak l$ containing $\mathfrak g_0$. If $\mu(\nabla)$ is constant we have $\mathfrak g = \mathfrak g_0$ and thus $\mathfrak g_0$ is maximal. Thus we may assume $\mu(\nabla)$ is not constant. Let $X$ be an element of $\mathfrak l$ in the form $X = X_0 + \sum_\lambda X_\lambda$ where $X_0 \in \mathfrak g_0$ and $X_\lambda \in \mathfrak g_\lambda$. Since
$$ Ad (\exp(t\,\grad^\prime \mu(\nabla))X = X_0 + \sum_{\lambda > 0} e^{\lambda t} X_\lambda \in \mathfrak l$$
for any $t \in \bfR$, by considering this for many values of $t$ we have $X_\lambda \in \mathfrak l$ for each $\lambda$. If $X_\lambda \ne 0$ for some $\lambda > 0$ then $\grad^\prime \mu(\nabla) + X_\lambda$ generates a solvable Lie subalgebra. This contradicts the reductiveness of $\mathfrak l$. Thus  $X_\lambda = 0$ for all $\lambda > 0$. Thus $\mathfrak g_0$ is a maximal reductive Lie subalgebra. To show the last statement let $K$ be a compact connected Lie subgroup of the group of all biholomorphisms of $M$ including the identity component of the group of all isometries of $M$, and $\mathfrak k$ be its real Lie algebra. Since $\mathfrak k \otimes \bfC$ is reductive and $\mathfrak g_0$ is a maximal reductive Lie subalgebra we must have $\mathfrak k \otimes \bfC = \mathfrak g_0 = \mathfrak i(M) \otimes \bfC$. Thus any element of $\mathfrak k$ can be written in the form $ J\grad\, f + \grad\, h$ for some real potential functions $f$ and $h$ where $J\grad\,f$ and $J\grad\,h$ are Killing vector fields (by using the identification remarked before the statement of Theorem \ref{Calabi2}). Since $\mathfrak i(M)$ is a Lie subalgebra of $\mathfrak k$ we have $\grad\,h \in \mathfrak k$. If $\grad\,h$ is non-zero it generates a non-compact group since $h$ has at least two critical points and $K$ can not be compact. Thus $\grad\,h$ has to be zero and $\mathfrak k = \mathfrak i(M)$. This proves (c).
\end{proof}

\begin{proof}[Proof of Theorem \ref{reductiveness}]
As shown by La Fuente-Gravy \cite{La Fuente-Gravy 2016}, if $lv(J)$ gives rise to 
closed Fedosov star product then $\mu(\nabla)$ is constant and thus $\mathfrak g = \mathfrak g_0$. It follows from Theorem \ref{Calabi2}, (c), 
that $\mathfrak g$ is reductive. This completes the proof.
\end{proof}

\begin{ex}
Let $M$ be a one point blow-up of the complex projective plane $\bfC\bfP^2$. Since $M$ is simply connected the reduced Lie algebra of holomorphic vector fields coincides with the Lie algebra of all holomorphic vector fields. The corresponding Lie group is the group of all biholomorphic automorphisms.
Any automorphism of $M$ leaves the exceptional divisor invariant, and thus descends to an automorpism
fixing the point where the blow-up is performed. Thus the reduced Lie algebra is of the form
$$ \left.\left\{\left(\begin{array}{ccc} \ast & \ast &\ast \\ 0& \ast &\ast \\ 0 &\ast &\ast \end{array}\right) \right\}\right/\mathrm{center}
$$
which is not reductive. Thus any K\"ahler metric with non-negative Ricci curvature on $M$ does not give closed Fedosov star product.
This example is also the simplest example of a compact K\"ahler manifold with no cscK metric.
\end{ex}




\end{document}